\documentclass[12pt,a4paper]{amsart}

\usepackage{a4wide}

\usepackage[matrix,arrow,curve,frame]{xy}
\usepackage{amsmath,amsthm,amssymb,enumerate}
\usepackage{latexsym}
\usepackage{amscd}

\usepackage{graphicx}
\usepackage{xcolor}

\usepackage[colorlinks=true]{hyperref}

\newtheorem{theorem}[subsection]{Theorem}
\newtheorem{thm}[subsection]{Theorem}
\newtheorem{defn}[subsection]{Definition}
\newtheorem{prop}[subsection]{Proposition}

\newtheorem{cor}[subsection]{Corollary}
\newtheorem{lemma}[subsection]{Lemma}

\newtheorem{remark}[subsection]{Remark}

\theoremstyle{definition}
\newtheorem{example}[subsection]{Example}

\DeclareMathOperator{\Pp}{P}

\newcommand{\Oo}{{\mathcal O}}

\newcommand{\bm}{{\bf m}}

\newcommand{\R}{{\mathbb R}}
\newcommand{\C}{{\mathbb C}}
\newcommand{\Proj}{{\mathbb P}}
\newcommand{\CP}{{\C\!\Pp}}
\newcommand{\RP}{{\R\!\Pp}}
\newcommand{\Z}{{\mathbb Z}}
\newcommand{\N}{{\mathbb N}}
\newcommand{\T}{{\mathbb T}}

\newcommand{\eps}{\varepsilon}
\newcommand{\pP}{\partial_P}
\newcommand{\pPP}{\partial_{P\times P}}
\newcommand{\pPp}{\partial_{P'}}
\newcommand{\pPd}{\partial_{P''}}

\newcommand{\oz}{\overline{z}}

\newcommand{\Uu}{{\mathcal U}}
\newcommand{\Vv}{{\mathcal V}}

\newcommand{\tM}{\tilde{M}}

\newcommand{\tP}{\tilde{P}}
\newcommand{\tmu}{\tilde{\mu}}

\newcommand{\tR}{\tilde{R}}
\newcommand{\tT}{\tilde{T}}
\newcommand{\tp}{\tilde{p}}
\newcommand{\tq}{\tilde{q}}
\newcommand{\ttau}{\tilde{\tau}}
\newcommand{\tomega}{\tilde{\omega}}
\newcommand{\tpsi}{\tilde{\psi}}
\newcommand{\ttheta}{\tilde{\theta}}
\newcommand{\tvphi}{\tilde{\varphi}}

\DeclareMathOperator{\im}{im}
\DeclareMathOperator{\Ham}{Ham}
\DeclareMathOperator{\interior}{int}

\DeclareMathOperator{\Lie}{Lie}

\begin{document}

\title{Remarks on Lagrangian intersections in toric manifolds} 

\author[M.~Abreu]{Miguel Abreu}
\address{Centro de An\'{a}lise Matem\'{a}tica, Geometria e Sistemas
Din\^{a}micos, Departamento de Matem\'atica, Instituto Superior T\'ecnico, Av.
Rovisco Pais, 1049-001 Lisboa, Portugal}
\email{mabreu@math.ist.utl.pt}

\author[L.~Macarini]{Leonardo Macarini}
\address{Universidade Federal do Rio de Janeiro, Instituto de Matem\'atica,
Cidade Universit\'aria, CEP 21941-909 - Rio de Janeiro - Brazil}
\email{leonardo@impa.br}

\date{\today}

\begin{abstract}
We consider two natural Lagrangian intersection problems in the context of symplectic toric manifolds:
displaceability of torus orbits and of a torus orbit with the real part of the toric manifold. Our remarks
address the fact that one can use simple cartesian product and symplectic reduction considerations 
to go from basic examples to much more sophisticated ones. We show in particular how rigidity results
for the above Lagrangian intersection problems in weighted projective spaces can be combined with
these considerations to prove analogous results for all monotone toric symplectic manifolds. We also
discuss non-monotone and/or non-Fano examples, including some with a continuum of non-displaceable
torus orbits.
\end{abstract}

\thanks{Partially supported by Funda\c c\~ao para a 
Ci\^encia e a Tecnologia (FCT/Portugal), Funda\c c\~ao 
Coordena\c c\~ao de Aperfei\c coamento de Pessoal de N\'{\i}vel Superior 
(CAPES/Brazil) and Conselho Nacional de Desenvolvimento Cient\'{\i}fico 
e Tecnol\'ogico (CNPq/Brazil).}

\maketitle         

\section{Introduction}
\label{sec: intro}

Let $(M^{2n}, \omega)$ be a toric symplectic manifold, i.e. a symplectic manifold equipped with
an effective Hamiltonian $\T^n$-action generated by a moment map
\[
\mu : M \to P := \mu(M) \subset (\R^n)^\ast\,,
\]
where the moment polytope $P$ is defined by
\begin{equation} \label{eq:polytope}
\ell_i (x) := \langle x, \nu_i \rangle + a_i \geq 0\,,\ 
i=1,\ldots,d.
\end{equation}
Here the $a_i$'s are real numbers, each vector $\nu_i \in\Z^n$ is the primitive integral interior
normal to the facet $F_i$ of the polytope $P$ and $d$ is the number of facets of $P$.

Denote by $\tau:M\to M$ the canonical anti-symplectic involution, characterized by
$\mu \circ \tau = \mu$, and let $R := M^\tau$ denote its fixed point set.
$R$ is a Lagrangian manifold, often called the real part of $M$.

Given $x\in\interior (P)$, let $T_x := \mu^{-1} (x)$ denote the corresponding $\T^n$-orbit,
a Lagrangian torus in $M$. Since $\mu \circ \tau = \mu$, we also have that  $\tau (T_x) = T_x$ and 
$(T_x)^{\tau} = T_x \cap R$. This last set, the real part of a regular $\T^n$-orbit, is discrete with 
$2^n$ points.

This can be seen quite explicitly in action-angle coordinates. Consider $\breve{M}\subset M$ 
defined by $\breve{M} = \mu^{-1} (\interior (P))$. One checks that $\breve{M}$ is an open dense subset of $M$, consisting of all the points where the $\T^n$-action is free. It can be described as
\[
\breve{M}\cong \interior (P) \times \T^n = 
\left\{ (x_1,\ldots,x_n, e^{i\theta_1}, \ldots, e^{i\theta_n}): x\in\interior (P)\subset\R^n\,,\ 
\theta\in\R^n/2\pi\Z^n\right\}\,,
\]
where $(x,\theta)$ are symplectic action-angle coordinates for $\omega$, i.e.
$$
\omega |_{\breve{M}} = d x\wedge d \theta = \sum_{j=1}^n d x_j \wedge d \theta_j\ .
$$
In these action-angle coordinates the moment map and anti-symplectic involution are given by
\[
\mu (x,e^{i\theta}) = x \quad\text{and}\quad \tau (x, e^{i\theta}) = (x, e^{-i\theta})\,.
\]
Hence, we have that
\[
R \cap \breve{M} = (x, \pm 1) \equiv (x_1,\ldots,x_n, \pm 1,\ldots,\pm 1)\,,\ x\in\interior(P)\,,
\]
and
\[
\sharp (R\pitchfork T_x) = 2^n\,,\ \forall\, x\in\interior(P)\,.
\]

\begin{example} \label{ex:cpn}
Consider $(\CP^n, \omega_{FS})$ with moment polytope $P\subset (\R^n)^\ast$ given by
\[
P = \left\{ (x_1,\ldots,x_n)\,:\ x_j + 1 \geq 0\,,\ j=1,\ldots,n\,,\ \text{and}\ 
-(\sum_{j=1}^n x_j) + 1 \geq 0 \right\}\,.
\]
Then
\[
R = \RP^n \quad\text{and}\quad T_0 := \mu^{-1} (0) \equiv\  \text{Clifford $n$-torus.}
\]
\end{example}

\begin{example} \label{ex:monotone}
Let $(M^{2n},\omega)$ be a monotone toric symplectic manifold, i.e. such that 
$[\omega] = \lambda (2\pi c_1 (\omega)) \in H^2 (M)$ with $\lambda \in\R^+$.
The corresponding moment polytope $P\subset (\R^n)^\ast$ is then a Fano Delzant
polytope, meaning that it can be defined by~(\ref{eq:polytope}) with
\[
a_1 = \cdots = a_d = \lambda\,.
\]
In this case, the Lagrangian torus fiber over the origin $0\in P$ will be called the 
\emph{centered} or \emph{special} torus fiber. It is the unique monotone torus fiber 
of the monotone toric symplectic manifold $(M^{2n},\omega)$.

Without any loss of generality, we will usually assume that $\lambda = 1$ (as we already 
did in Example~\ref{ex:cpn}).
\end{example}

In this context of toric symplectic manifolds, it is natural to consider the following Lagrangian
intersection rigidity question: given $x\in\interior(P)$, does there exist $\psi \in \Ham (M,\omega)$ such that
\[
\psi (T_x) \cap T_x = \emptyset \quad\text{or}\quad
\psi (T_x) \cap R = \emptyset \quad\text{or}\quad
\psi (R) \cap R = \emptyset \ ?
\]
Our remarks in this paper concern the first two of these Lagrangian intersection problems
and will show how simple cartesian product (Section~\ref{sec:rmk-1}) and symplectic 
reduction (Section~\ref{sec:rmk-2}) considerations can
be used to go from basic examples to more sophisticated ones. 

To illustrate our point, consider the following theorem on the most basic example that can
be considered in this context.

\begin{theorem} \label{thm:basic} For $(\CP^n, \omega_{FS})$, as in Example~\ref{ex:cpn},
we have that:
\begin{itemize}
\item[(i)]~\cite{BEP, Cho1}
\[
\psi (T_0) \cap T_0 \ne \emptyset \quad\text{and}\quad \sharp (\psi (T_0) \pitchfork T_0) \geq 2^n\,,
\ \forall\, \psi \in \Ham (\CP^n,\omega_{FS})\,.
\]
\item[(ii)]~\cite{BC1, BC2, EP, Tamarkin}
\[
\psi (T_0^n) \cap \RP^n \ne \emptyset \,, \ \forall\, \psi \in \Ham (\CP^n,\omega_{FS})\,.
\]
\item[(iii)]~\cite{Alston}
\[
\sharp (\psi (T_0^{2n-1}) \pitchfork \RP^{2n-1}) \geq 2^n \,, \ \forall\, \psi \in \Ham (\CP^{2n-1},\omega_{FS})\,.
\]
\end{itemize}
\end{theorem}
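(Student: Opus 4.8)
The three statements flow from one mechanism, so the plan is to reduce each to the non-vanishing (and rank) of a suitable Lagrangian Floer homology. Observe first that $T_0$, $\RP^n$ and $\RP^{2n-1}$ are monotone Lagrangians in the respective $(\CP^m,\omega_{FS})$, so that, with suitable coefficients and a choice of flat local system, the self-Floer homology $HF(T_0,T_0)$ and the pair Floer homologies $HF(T_0,\RP^m)$ are defined. The two homological inputs I would use are standard: $HF(\psi(L),L')\cong HF(L,L')$ is a Hamiltonian isotopy invariant, so $HF(L,L')\neq 0$ forces $\psi(L)\cap L'\neq\emptyset$ for every $\psi\in\Ham(\CP^m,\omega_{FS})$; and, when the intersection is transverse, $\sharp(\psi(L)\pitchfork L')\ge\dim HF(L,L')$. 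Thus (i)--(iii) each become a computation of the relevant Floer homology.

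For (i) I would follow Cho. The Clifford torus $T_0$ is the monotone fiber, and its disk potential is the Landau--Ginzburg function $W(z)=z_1+\cdots+z_n+(z_1\cdots z_n)^{-1}$ on $(\C^\ast)^n$, where the choice of flat connection is encoded in the phase of $z$. A direct calculation shows that the critical point equations force $z_1=\cdots=z_n=c$ with $c^{\,n+1}=1$, giving $n+1$ nondegenerate critical points; at each of these the Floer differential on $CF(T_0,T_0)$ vanishes, so $HF(T_0,T_0)\cong H_\ast(\T^n;\C)$, of total rank $2^n$. Non-vanishing yields $\psi(T_0)\cap T_0\neq\emptyset$ and the rank yields $\sharp(\psi(T_0)\pitchfork T_0)\ge 2^n$. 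Equivalently, one can argue via Biran--Entov--Polterovich that $T_0$ is a stem, hence superheavy, and therefore meets every Hamiltonian image of itself.

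For (ii) and (iii) the relevant pair is $(T_0,\RP^m)$. For (ii) the cleanest route is the Entov--Polterovich quasi-state machinery: since the quantum homology of $\CP^n$ is a field, $\CP^n$ carries a unique homogeneous symplectic quasi-state, with respect to which $T_0$ is superheavy and $\RP^n$ is heavy; as $\psi(T_0)$ remains superheavy and a superheavy set meets every heavy set, we get $\psi(T_0)\cap\RP^n\neq\emptyset$. The sheaf-theoretic argument of Tamarkin and the Lagrangian quantum homology argument of Biran--Cornea give the same conclusion. For (iii) the restriction to odd complex dimension $2n-1$ is essential: $\RP^{2n-1}$ is then orientable, so the moduli spaces defining $HF(T_0,\RP^{2n-1})$ can be coherently oriented, and Alston's computation identifies this homology as having rank $2^n$, whence the transverse count $\sharp(\psi(T_0^{2n-1})\pitchfork\RP^{2n-1})\ge 2^n$.

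The main obstacle throughout is the definition and computation of Floer homology in this monotone-but-obstructed regime: boundary bubbling of Maslov-index-$2$ disks means $\partial^2\neq 0$ in general, and one must show, via the critical point condition for $W$ (equivalently a judicious choice of local system), that the obstruction terms cancel. The most delicate piece is the orientation and coefficient bookkeeping for the real Lagrangian $\RP^m$: it is precisely the failure of orientability in even real dimension that both confines the sharp count in (iii) to odd complex dimension and makes (ii) genuinely more subtle than the toric computation in (i).
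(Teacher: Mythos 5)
First, a point of comparison: the paper does not prove Theorem~\ref{thm:basic} at all --- it is imported background, with (i) attributed to \cite{BEP,Cho1}, (ii) to \cite{BC1,BC2,EP,Tamarkin} and (iii) to \cite{Alston}. So your proposal is to be measured against those sources and against Section~\ref{sec:CFI}--\ref{sec:LFH}, where the paper reconstructs the mechanism behind (iii). Your sketches of (i) and (ii) are faithful to the cited proofs: the critical points of $W(z)=z_1+\cdots+z_n+(z_1\cdots z_n)^{-1}$ are indeed $z_1=\cdots=z_n=c$ with $c^{n+1}=1$, giving Cho's computation $HF(T_0,T_0)\cong H_*(\T^n)$ of rank $2^n$, and the heavy/superheavy intersection argument (Clifford torus a stem, hence superheavy; $\RP^n$ heavy because $HF(\RP^n,\RP^n)\neq 0$; superheavy sets meet heavy sets, and superheaviness is $\Ham$-invariant) is exactly Entov--Polterovich's route.

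The genuine error is in your explanation of (iii). You attribute the restriction to $\CP^{2n-1}$ to orientability of $\RP^{2n-1}$ and to the need for coherent orientations of moduli spaces. But Alston's computation --- and the paper's reconstruction of it --- is carried out with $\Z_2$ coefficients, where orientations play no role whatsoever. The actual source of the dimension restriction is well-definedness of the differential: $\pP^2$ counts the Maslov-index-two holomorphic disks with boundary on $T_0$ that bubble off at each intersection point, one for each facet of the moment simplex, so over $\Z_2$ one gets $\pP^2 = d\cdot\mathrm{id}$ where $d$ is the number of facets; this is precisely Proposition~\ref{prop:square} and the Remark following Corollary~\ref{cor:main}. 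For $\CP^m$ one has $d=m+1$, so $HF(\RP^m,T_0;\Z_2)$ is defined exactly when $m$ is odd. That this parity coincides with orientability of $\RP^m$ is an accident of arithmetic, not the operative mechanism. The distinction matters: your reading suggests the even-dimensional case is intrinsically obstructed, whereas the paper's point (Remark~\ref{rmk:even}, \S~\ref{ssec:even}, Theorem~\ref{thm:even}) is that the bound $2^n$ persists for $\CP^{2n}$, because the parity obstruction disappears after passing to $P\times P$ (an even number of facets) or after realizing $\CP^{2n}$ as a centered symplectic reduction of $\CP^{2n+1}$; the restriction in \cite{Alston} is an artifact of the basic Floer technology, not of the statement.
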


The following results follow by straightforward applications of our remarks.

\begin{theorem} \label{thm:even} (Cf. Remark~\ref{rmk:even} and \S~\ref{ssec:even}.)
\[
\sharp (\psi (T_0^{2n}) \pitchfork \RP^{2n}) \geq 2^n \,, \ \forall\, \psi \in \Ham (\CP^{2n},\omega_{FS})\,.
\]
\end{theorem}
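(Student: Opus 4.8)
The plan is to deduce the even-dimensional statement for $\CP^{2n}$ from the odd-dimensional result of Theorem~\ref{thm:basic}(iii) applied to $\CP^{2n+1}$, using the symplectic reduction considerations of \S~\ref{sec:rmk-2}. The point is that $\CP^{2n+1} = \CP^{2(n+1)-1}$ already satisfies Alston's bound (Theorem~\ref{thm:basic}(iii) with $n$ replaced by $n+1$),
\[
\sharp\bigl(\tilde\psi(T_0^{2n+1}) \pitchfork \RP^{2n+1}\bigr) \geq 2^{n+1}\,, \quad \forall\, \tilde\psi \in \Ham(\CP^{2n+1},\omega_{FS})\,,
\]
and I will exhibit $(\CP^{2n},\omega_{FS})$ as a symplectic reduction of $(\CP^{2n+1},\omega_{FS})$ in which the monotone torus $T_0^{2n}$ and the real part $\RP^{2n}$ are the reductions of $T_0^{2n+1}$ and $\RP^{2n+1}$. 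The factor of two lost in passing to the quotient is exactly what turns the bound $2^{n+1}$ upstairs into the desired bound $2^n$ downstairs.

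First I would set up the reduction. Let $S^1 \subset \T^{2n+1}$ be the subcircle rotating the last angle coordinate $\theta_{2n+1}$, with moment map $\mu_{S^1} = x_{2n+1}$. In the normalization $a_1 = \cdots = a_{2n+2} = 1$ of Example~\ref{ex:monotone}, the center of the simplex $\Delta^{2n+1}$ sits at the origin, so that $T_0^{2n+1} \subset \{x_{2n+1} = 0\} = \mu_{S^1}^{-1}(0)$. A direct check shows that the central slice $\{x_{2n+1}=0\} \cap \Delta^{2n+1}$ is again a standard simplex $\Delta^{2n}$ with all $a_i = 1$, whose barycenter is the origin; hence the reduced space is $(\CP^{2n},\omega_{FS})$ with its monotone torus $T_0^{2n} = \pi(T_0^{2n+1})$, where $\pi : \mu_{S^1}^{-1}(0) \to \CP^{2n}$ denotes the quotient projection. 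Since the anti-symplectic involution $\tau$ commutes with $\mu_{S^1}$ and with the $S^1$-action, it descends to the canonical involution on the reduction, whose fixed locus $\RP^{2n}$ is the image $\pi\bigl(\RP^{2n+1} \cap \mu_{S^1}^{-1}(0)\bigr)$.

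Next I would descend the Hamiltonian and count. Given $\psi \in \Ham(\CP^{2n},\omega_{FS})$ generated by $H_t$, I would lift $H_t$ to an $S^1$-invariant Hamiltonian $\tilde H_t$ on $\CP^{2n+1}$ supported near $\mu_{S^1}^{-1}(0)$ whose reduction is $H_t$; the resulting $\tilde\psi \in \Ham(\CP^{2n+1},\omega_{FS})$ then preserves the level set $\mu_{S^1}^{-1}(0)$ and satisfies $\pi \circ \tilde\psi = \psi \circ \pi$ there. In particular $\tilde\psi(T_0^{2n+1}) \subset \mu_{S^1}^{-1}(0)$ is an $S^1$-invariant Lagrangian fibering over $\psi(T_0^{2n})$. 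Over a point $q \in \psi(T_0^{2n})$ the $S^1$-fiber of $\tilde\psi(T_0^{2n+1})$ meets $\RP^{2n+1}$ precisely when $q \in \RP^{2n}$, and then in the two points with $\theta_{2n+1} \in \{0,\pi\}$; thus
\[
\sharp\bigl(\tilde\psi(T_0^{2n+1}) \pitchfork \RP^{2n+1}\bigr) = 2\,\sharp\bigl(\psi(T_0^{2n}) \pitchfork \RP^{2n}\bigr)\,.
\]
Combining this with Alston's bound upstairs gives $2\,\sharp\bigl(\psi(T_0^{2n}) \pitchfork \RP^{2n}\bigr) \geq 2^{n+1}$, which is the asserted inequality.

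The main obstacle is the reduction lemma underlying the previous paragraph, namely the precise statement --- carried out in \S~\ref{sec:rmk-2} --- that every $\psi \in \Ham(\CP^{2n})$ lifts to an $S^1$-invariant $\tilde\psi \in \Ham(\CP^{2n+1})$ preserving $\mu_{S^1}^{-1}(0)$ and inducing $\psi$ on the quotient. Once this is in hand, the only remaining care is transversality: one should check that the two intersection points on each real fiber are transverse upstairs whenever the quotient intersection is transverse downstairs, so that the $2$-to-$1$ correspondence holds at the level of transverse intersection numbers. Both points are general features of the reduction picture rather than anything special to $\CP^n$, which is precisely why the same mechanism is expected to apply to the more general monotone examples treated later.
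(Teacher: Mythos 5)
Your proposal is correct and follows essentially the same route as the paper's own proof in \S~\ref{ssec:even}: there too, $(\CP^{2n},\omega_{FS})$ is exhibited as a centered symplectic reduction of $(\CP^{2n+1},\omega_{FS})$, the Hamiltonian is lifted as in Lemma~\ref{lem:lift}, and Alston's bound $2^{n+1}$ upstairs is divided by $2^{N-n}=2$ via Proposition~\ref{prop:displace}(i) and Corollary~\ref{cor:displace}(i). The transversality check you flag at the end is left implicit in the paper as well, so your write-up is, if anything, slightly more careful on that point.
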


\begin{theorem} \label{thm:genbasic}
Let $(M^{2n}, \omega)$ be a monotone toric symplectic manifold, as in Example~\ref{ex:monotone},
$R$ its real part and $T$ its special centered torus fiber. Let $\nu_1, \ldots, \nu_d \in \Z^n$ denote
the primitive integral interior normals to the facets of the corresponding Fano Delzant polytope
$P\subset(\R^n)^\ast$.
\begin{itemize}
\item[(i)] (Cf. Proposition~\ref{prop:zero-sum} in \S~\ref{ssec:zero-sum}.) If $\sum_{i=1}^d \nu_i = 0$ then
\[
\psi (T) \cap R \ne \emptyset \,,\  \psi (T) \cap T \ne \emptyset 
\quad\text{and}\quad \sharp (\psi (T) \pitchfork T) \geq 2^n
\,, \ \forall\, \psi \in \Ham (M,\omega)\,.
\]
\item[(ii)] (Cf. Proposition~\ref{prop:sym} in \S~\ref{ssec:sym}.) If $P\subset(\R^n)^\ast$ is \emph{symmetric},
i.e. whenever $\nu\in\Z^n$ is the normal to a facet of $P$ then $-\nu\in\Z^n$ is also the normal to a facet of $P$,
then
\[
\sharp (\psi(T) \pitchfork R) \geq 2^n \,, \ \forall\, \psi \in \Ham (M,\omega)\,.
\]
\end{itemize}
\end{theorem}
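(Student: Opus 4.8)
The plan is to deduce both statements from Theorem~\ref{thm:basic} and its weighted projective analogues by the two structural mechanisms advertised in the introduction: the cartesian product principle of \S\ref{sec:rmk-1} and the symplectic reduction principle of \S\ref{sec:rmk-2}. In both parts the starting observation is the Delzant short exact sequence
\[
0 \longrightarrow \mathfrak{k} \longrightarrow \R^d \stackrel{\beta}{\longrightarrow} \R^n \longrightarrow 0\,, \qquad \beta(e_i) = \nu_i\,,
\]
which realizes $(M,\omega)$ as the symplectic reduction of $\C^d$ (with its standard $\T^d$-action) at the monotone level by the subtorus $K\subset\T^d$ with $\Lie(K)=\mathfrak{k}$. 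The crucial remark is that in both cases the hypothesis forces the diagonal circle $S^1_{\mathrm{diag}} := \{(t,\ldots,t)\}\subset\T^d$ to lie in $K$: indeed $\beta(1,\ldots,1)=\sum_{i=1}^d\nu_i$, so the condition $\sum\nu_i=0$ (which also follows from symmetry in part (ii)) says exactly that $S^1_{\mathrm{diag}}\subset K$. Reducing $\C^d$ first by $S^1_{\mathrm{diag}}$ produces $\CP^{d-1}$, so $M$ is exhibited as a symplectic reduction of $\CP^{d-1}$ by the residual torus $K' := K/S^1_{\mathrm{diag}}$ of dimension $d-1-n$, with the monotone fiber $T$ and the real part $R$ of $M$ descending respectively from the Clifford torus $T_0^{d-1}$ and the real part $\RP^{d-1}$ of $\CP^{d-1}$ (monotonicity of $M$ guaranteeing that the central, monotone data match under the reduction).

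For part (i) I would invoke Theorem~\ref{thm:basic}(i)--(ii) for $\CP^{d-1}$ together with the reduction principle of \S\ref{sec:rmk-2}. Non-displaceability of $T$ from itself and from $R$ transfers directly: any $\psi\in\Ham(M,\omega)$ lifts to a $K'$-invariant Hamiltonian diffeomorphism of $\CP^{d-1}$, so $\psi(T)\cap T=\emptyset$ or $\psi(T)\cap R=\emptyset$ would force the corresponding sets in $\CP^{d-1}$ to be disjoint, contradicting Theorem~\ref{thm:basic}. For the refined count $\sharp(\psi(T)\pitchfork T)\ge 2^n$ I would argue that the reduced monotone torus $T$ carries Floer homology of full rank, so that the robust count $2^{d-1}$ in $\CP^{d-1}$ descends to $2^{d-1}/2^{\dim K'}=2^{(d-1)-(d-1-n)}=2^n$, each intersection point downstairs corresponding to a $2^{\dim K'}$-fold family upstairs. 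This arithmetic matching is clean precisely because the torus is $K'$-invariant.

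For part (ii) the symmetry hypothesis must be used to produce the sharper real-part count $\sharp(\psi(T)\pitchfork R)\ge 2^n$, which cannot be read off from the mere non-emptiness statement Theorem~\ref{thm:basic}(ii). Here the point is that a symmetric polytope has an even number $d=2k$ of facets, so the ambient space $\CP^{d-1}=\CP^{2k-1}$ is odd-dimensional and the \emph{counted} statement Theorem~\ref{thm:basic}(iii) (together with its even-dimensional companion Theorem~\ref{thm:even}) becomes available upstairs. Pairing the facets as $\nu\leftrightarrow-\nu$ should moreover let one organize the reduction through a product of odd-dimensional (weighted) projective factors, to which \S\ref{sec:rmk-1} applies, so that robust real-part intersection numbers multiply before descending.

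The step I expect to be the main obstacle is precisely this descent of the real-part count in part (ii). Unlike the torus self-intersection, $\psi(T)\pitchfork R$ does not transfer by a clean division: each point of $T\cap R$ lifts to $2^{\dim K'}$ points of $\tilde T\cap \tilde R$ upstairs, but one only retains those lying in the zero level set of the $K'$-moment map, and a naive accounting then turns the bound $2^k$ of $\CP^{2k-1}$ into something far smaller than $2^n$. The role of the symmetry must be to guarantee that enough real intersection points persist under reduction, and making this rigorous --- either by a direct count adapted to the $\pm\nu$ pairing, or by a real/equivariant Floer-homological version of the reduction principle matched against the base cases Theorem~\ref{thm:basic}(iii) and Theorem~\ref{thm:even} --- is the delicate heart of the argument. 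By contrast, I expect the non-emptiness assertions and the whole of part (i) to follow routinely once the reduction-from-$\CP^{d-1}$ picture is in place.
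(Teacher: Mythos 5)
Your part (i) is essentially the paper's own proof: the zero-sum condition puts the diagonal circle inside $K$, reduction in stages (Proposition~\ref{prop:factor}) factors the standard construction of $M$ from $\C^d$ through $\CP^{d-1}$, monotonicity makes this a centered reduction, and Corollary~\ref{cor:displace} transfers Theorem~\ref{thm:basic}(i)--(ii) downstairs, with the count $2^{d-1}/2^{(d-1)-n}=2^n$. One small correction of mechanism: the descent of the transversal count is not via ``Floer homology of full rank'' for $T$, but via Proposition~\ref{prop:displace}(iii) --- a downstairs $\psi$ with $r$ transverse intersection points produces an upstairs Hamiltonian diffeomorphism with exactly $r\cdot 2^{(d-1)-n}$ transverse points, by perturbing each Morse--Bott intersection $K$-orbit using a displacing Hamiltonian in $T^\ast K$; if $r<2^n$ this contradicts the $\CP^{d-1}$ bound.

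In part (ii), however, there is a genuine gap, which you yourself flag as ``the delicate heart of the argument'': reducing from $\CP^{d-1}$ cannot give the bound $2^n$, and no amount of care in the descent will fix this. With $d=2k$ facets, Alston's bound upstairs on $\CP^{2k-1}$ is $2^k$, and dividing by $2^{\dim K'}=2^{2k-1-n}$ leaves only $2^{n+1-k}$; for the monotone hexagon ($n=2$, $k=3$) this is $1$, exactly the weak bound the paper records in \S~\ref{ssec:sym} when reducing $\CP^2\sharp 3\overline{\CP}^2$ from $\CP^5$. The paper's resolution is not a real or equivariant refinement of the reduction principle but a better choice of ambient manifold: since the normals come in pairs $\pm\nu$, the subtorus of $\T^d$ generated by $e_i+e_{i'}$ (one generator for each opposite pair of facets $i,i'$) lies in $\ker\beta$, so by reduction in stages $M$ is a symplectic reduction of the product of $k=d/2$ copies of $\CP^1$, and monotonicity forces all the $\CP^1$'s to have equal area and the reduction to be centered. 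For this ambient manifold the needed upstairs count $\sharp(\tpsi(\tT)\pitchfork\tR)\ge 2^k$ is exactly what Section~\ref{sec:rmk-1} provides: the cube is an even \emph{symmetric} polytope, so $\pP=0$ and $HF=2^k$ (Proposition~\ref{prop:even-nu}), and Theorem~\ref{thm:main} with Corollary~\ref{cor:main} --- equivalently, a K\"unneth product of Alston's bound for $\CP^1$ --- gives the intersection bound. Since the fiber torus now has dimension only $k-n$, Corollary~\ref{cor:displace}(i) yields $2^k/2^{k-n}=2^n$ with no loss. Your instinct to ``organize the reduction through a product \dots so that intersection numbers multiply before descending'' is precisely right, but the proof requires actually exhibiting $(\CP^1)^k$ as the intermediate stage and invoking the product bound; as written, your argument stops where the paper's begins.
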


\begin{remark} \label{rmk:sym} A particular interesting example that fits both (i) and (ii) of this theorem is
$M = \CP^2 \sharp 3 \overline{\CP}^2$, equipped with a monotone symplectic form (cf. \S~\ref{ssec:sym}).
\end{remark}

Consider now the following generalization of part (i) of Theorem~\ref{thm:basic}, contained in the work
of Woodward~\cite{Woodward} and forthcoming work of Cho and Poddar~\cite{Cho3}.

\begin{theorem} \label{thm:weighted}
Let $\CP(1,\bm) := \CP (1, m_1, \ldots, m_n)$, $m_1,\ldots,m_n\in\N$, denote the weighted projective space 
determined by the moment polytope $P _\bm\subset (\R^n)^\ast$ given by
\[
P = \left\{ (x_1,\ldots,x_n)\,:\ x_j + 1 \geq 0\,,\ j=1,\ldots,n\,,\ \text{and}\ 
-(\sum_{j=1}^n m_j x_j) + 1 \geq 0 \right\}
\]
(i.e. the symplectic quotient of $\C^{n+1}$ by the $S^1$-action with weights $(1,m_1, \ldots, m_n)$). 
Let $T_0 := \mu^{-1} (0)$ denote its special centered torus fiber, where $\mu:\CP (1, \bm) \to P_\bm$ is the 
moment map. Then
\[
\psi (T_0) \cap T_0 \ne \emptyset \quad\text{and}\quad \sharp (\psi (T_0) \pitchfork T_0) \geq 2^n\,,
\ \forall\, \psi \in \Ham (\CP(1,\bm),\omega_{\bm})\,.
\]
\end{theorem}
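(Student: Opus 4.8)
The plan is to attack this via the Floer-theoretic machinery for Lagrangian torus fibres in toric orbifolds, in the guise of the Landau--Ginzburg potential function (superpotential) developed by Fukaya--Oh--Ohta--Ono and extended to weighted projective spaces by Cho--Poddar, respectively via the gauged/quasimap Floer theory of Woodward. The torus $T_0$ sits in the smooth locus of $\CP(1,\bm)$, so one may form its Fukaya $A_\infty$-algebra over the Novikov field $\Lambda$ and deform it by weak bounding cochains $b$, which here are parametrised by a flat $U(1)^n$-connection with holonomies $y_1,\dots,y_n \in \Lambda^\times$. Non-displaceability will follow from producing a single $b$ at which the deformed Floer differential squares to zero with non-vanishing cohomology; the quantitative bound then comes from identifying that cohomology with $H^*(T^n;\Lambda)$.

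Concretely, I would compute the potential function $\mathfrak{PO}$, which counts Maslov-index-two holomorphic (orbi-)discs with boundary on $T_0$. The smooth discs are in bijection with the $n+1$ facets of $P_\bm$, whose inner normals are $e_1,\dots,e_n$ and $-(m_1,\dots,m_n)$; evaluated at the centre $0\in\interior(P_\bm)$ (where the monotone normalisation makes every facet distance equal to $1$) they contribute the leading term
\[
\mathfrak{PO}_0 \;=\; T\Big(\sum_{j=1}^n y_j \;+\; \prod_{k=1}^n y_k^{-m_k}\Big),
\]
with $T$ the Novikov parameter. In addition there are contributions from orbi-discs passing through the twisted sectors over the singular strata of $\CP(1,\bm)$; supplying and controlling these extra monomial terms is exactly the content imported from Cho--Poddar (or, in Woodward's formulation, from the gauged theory upstairs on $\C^{n+1}$).

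The critical-point equations $y_j\,\partial_{y_j}\mathfrak{PO}_0 = 0$ read $y_j = m_j W$ with $W := \prod_k y_k^{-m_k}$, whence $W^{\,1+\sum_k m_k} = \prod_k m_k^{-m_k}$; over the algebraically closed Novikov field this has $1+\sum_k m_k$ solutions, each determining a critical point $b$. (For $\CP^n$, i.e.\ $m_1=\dots=m_n=1$, this recovers the $n+1$ roots of unity and hence part~(i) of Theorem~\ref{thm:basic}.) At such a point the Hessian of $\mathfrak{PO}$ is invertible, so the deformed Floer cohomology is $HF(T_0,b)\cong H^*(T^n;\Lambda)$, of total rank $2^n$. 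Non-displaceability $\psi(T_0)\cap T_0\neq\emptyset$ is then immediate, since a displacing $\psi$ would force this cohomology to vanish; and when the intersection is transverse the usual inequality $\sharp\big(\psi(T_0)\pitchfork T_0\big)\ge \dim_\Lambda HF(T_0,b)=2^n$ yields the stated count.

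The main obstacle is precisely the orbifold correction. For $\CP^n$ there are no orbi-discs and the computation is classical, but for general weights one must show that the twisted-sector terms neither obstruct the $A_\infty$-structure (so that $\mathfrak{PO}$ is genuinely a function of $b$) nor destroy the non-degenerate critical point found at leading order. Because the monotone normalisation forces the smooth Maslov-two discs to the same energy, a Newton-polytope/implicit-function argument should propagate the leading critical point to the full potential, provided the orbi-disc terms are of strictly higher energy or enter with controllable coefficients; verifying this, together with convergence in $\Lambda$ and the transversality/regularity of the relevant orbi-disc moduli, is where the real work of Cho--Poddar and Woodward lies.
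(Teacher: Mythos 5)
Theorem~\ref{thm:weighted} is not actually proved in this paper: it is imported as a black box from Woodward's gauged Floer theory and the (then forthcoming) orbifold Floer theory of Cho--Poddar, and is only \emph{used} here, via the symplectic reduction remark, to deduce Proposition~\ref{prop:monotone}. Your proposal is an accurate sketch of precisely what those cited works supply --- the leading-order potential $T\bigl(\sum_j y_j + \prod_k y_k^{-m_k}\bigr)$, its $1+\sum_k m_k$ critical points with non-degenerate Hessian (determinant proportional to $\prod_k m_k\,(1+\sum_k m_k)\neq 0$), hence $HF\cong H^*(T^n;\Lambda)$, non-displaceability, and the $2^n$ transverse bound --- so it follows essentially the same route as the paper, deferring exactly the same technical content (orbi-disc corrections in Cho--Poddar, respectively their absence in Woodward's quasimap framework) to the same references.
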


The following result, removing the zero-sum assumption in part (i) of Theorem~\ref{thm:genbasic} and originally 
due to Entov-Polterovich~\cite{EP}, Cho~\cite{Cho2} and Fukaya-Oh-Ohta-Ono~\cite{FOOO2}, follows from this
Theorem~\ref{thm:weighted} by a straightforward application of our symplectic reduction remark 
(cf. Proposition~\ref{prop:monotone} in \S~\ref{ssec:monotone}).

\begin{theorem} \label{thm:monotone}
Let $(M, \omega)$ be a compact monotone toric symplectic manifold, as in Example~\ref{ex:monotone},
and $T$ its special centered torus fiber. Then
\[
\psi (T) \cap T \ne \emptyset \quad\text{and}\quad \sharp (\psi (T) \pitchfork T) \geq 2^n\,,\ \forall\, \psi \in 
\Ham (M,\omega)\,.
\]
\end{theorem}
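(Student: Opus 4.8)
The plan is to deduce this statement from Theorem~\ref{thm:weighted} by exhibiting $(M,\omega)$ as a \emph{monotone} symplectic reduction of a single weighted projective space, and then pushing the conclusions of that theorem through the reduction. Write the Fano Delzant polytope $P\subset(\R^n)^\ast$ as in~(\ref{eq:polytope}) with $a_1=\cdots=a_d=1$ and normals $\nu_1,\ldots,\nu_d\in\Z^n$. The first point is that, since $M$ (hence $P$) is compact, the recession cone $\{x:\langle\nu_i,x\rangle\ge 0,\ i=1,\ldots,d\}$ is trivial; by a standard linear-programming duality (Gordan's lemma/Stiemke's theorem) this is exactly equivalent to the existence of a \emph{strictly positive} relation $\sum_{i=1}^d w_i\nu_i=0$ among the normals. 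After clearing denominators and reordering, and using the smoothness of $M$, I would normalize this relation to a weight vector of the form $(1,m_1,\ldots,m_{d-1})$, that is, to the defining relation of a weighted projective space $\CP(1,\bm)$ with $\bm=(m_1,\ldots,m_{d-1})$ and complex dimension $d-1$.

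The role of this relation is the following. By the Delzant construction $M=\C^d/\!\!/ K$ is the symplectic quotient of $\C^d$ by the subtorus $K=\ker(\T^d\to\T^n)$ determined by the normals, and the positive relation singles out a circle $S^1\subset K$ acting with weights $(1,\bm)$, whose quotient is precisely $\CP(1,\bm)$. Performing this $S^1$-reduction first, one obtains
\[
M \;\cong\; \CP(1,\bm)/\!\!/ G, \qquad G:=K/S^1\cong\T^{\,d-1-n}.
\]
Because every $a_i$ equals $1$, the reduction can be taken at the level for which the induced form on $M$ is again monotone and the center $0\in P$ is the image of the center $0\in P_\bm$. Consequently the special centered fiber $T\subset M$ is the reduction $T=\pi(\tT)$ of the special centered fiber $\tT\subset\CP(1,\bm)$, where $\pi:\tmu_G^{-1}(c)\to M$ is the reduction projection and $\tT=\pi^{-1}(T)$ is a single $G$-invariant Lagrangian torus fiber lying over the smooth locus.

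It then remains to transfer the conclusions of Theorem~\ref{thm:weighted} for $\tT$ through the reduction, which is the content of our symplectic reduction remark. Any $\psi\in\Ham(M,\omega)$ generated by $H_t$ lifts to the Hamiltonian isotopy of $\CP(1,\bm)$ generated by a $G$-invariant extension of $H_t\circ\pi$; this isotopy preserves the level set and covers $\psi$, so its time-one map $\tpsi$ satisfies $\tpsi(\tT)=\pi^{-1}(\psi(T))$ and hence
\[
\tpsi(\tT)\cap\tT=\pi^{-1}\bigl(\psi(T)\cap T\bigr).
\]
In particular $\psi(T)\cap T\neq\emptyset$ whenever $\tpsi(\tT)\cap\tT\neq\emptyset$, which gives the non-displaceability statement directly from Theorem~\ref{thm:weighted}. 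For the sharp count I would perturb $\psi$ so that $\psi(T)\pitchfork T$; then $\tpsi(\tT)\cap\tT$ is a clean (Morse--Bott) union of $\sharp(\psi(T)\pitchfork T)$ copies of the orbit $G\cong\T^{\,d-1-n}$. Feeding this into the cohomological lower bound underlying Theorem~\ref{thm:weighted}, a Morse--Bott spectral sequence shows that each such orbit contributes a factor $\rank H^*(\T^{\,d-1-n})=2^{\,d-1-n}$, so that
\[
2^{\,d-1}\le \sharp(\psi(T)\pitchfork T)\cdot 2^{\,d-1-n},
\]
and therefore $\sharp(\psi(T)\pitchfork T)\ge 2^n$.

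The main obstacle is the construction in the first two paragraphs rather than the final bookkeeping. One must verify that the positive relation coming from compactness can genuinely be normalized, using the smoothness of $M$, to a weight-$(1,\bm)$ vector, and that the associated $S^1\subset K$ extends to a reduction presenting $M$ \emph{monotonically} with centers matching; this is exactly where the hypothesis $a_1=\cdots=a_d=1$ is used and is the delicate step. Once $M$ is exhibited as such a monotone reduction of $\CP(1,\bm)$, the descent of both the non-displaceability and the sharp $2^n$ intersection bound is the routine part, handled by the reduction remark together with the spectral-sequence estimate above.
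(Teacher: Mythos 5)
Your overall strategy is the paper's own: factor the Delzant reduction of $\C^d$ through a weighted projective space $\CP(1,\bm)$ of complex dimension $d-1$, lift Hamiltonians through the residual reduction, and descend the conclusions of Theorem~\ref{thm:weighted} (this is exactly the chain Proposition~\ref{prop:factor} $+$ Lemma~\ref{lem:lift} $+$ Proposition~\ref{prop:displace}/Corollary~\ref{cor:displace}, assembled in Proposition~\ref{prop:monotone}). But there is a genuine gap at the step you yourself flag as ``delicate'': the existence of a positive \emph{integral} relation $\nu_k+\sum_{j\ne k}m_j\nu_j=0$ in which one coefficient equals $1$. Compactness plus Stiemke/Gordan duality only gives \emph{some} strictly positive rational relation $\sum_i w_i\nu_i=0$; after clearing denominators there is no operation that ``normalizes'' such a relation so that a chosen coefficient becomes $1$ (dividing by the gcd of the $w_i$ does not achieve this, and any other rescaling destroys integrality). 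This is not bookkeeping: if no coefficient can be made equal to $1$, the intermediate quotient is a $\CP(m_1,\dots,m_d)$ with all weights at least $2$, and Theorem~\ref{thm:weighted}, which is stated only for weight vectors of the form $(1,\bm)$, does not apply. The paper closes this gap with Lemma~\ref{lem:monotone}, whose proof is not a normalization of an abstract positive relation but a construction from the fan: since $M$ is compact its fan is complete, so $\nu:=-\sum_{j=1}^d\nu_j$ lies in the cone spanned by the normals $\{\nu_j\}_{j\in V}$ at some vertex; smoothness (the Delzant condition) says these normals form a $\Z$-basis of $\Z^n$, so $\nu=\sum_{j\in V}c_j\nu_j$ with all $c_j$ non-negative integers, and adding $\sum_{j=1}^d\nu_j$ to both sides produces a positive integral relation in which every normal with index outside $V$ (at least one exists, since $d>n$) appears with coefficient exactly $1$. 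Your proposal needs this argument, or an equivalent one, and does not contain it.

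A second, more repairable problem concerns the sharp count. You apply a ``Morse--Bott spectral sequence'' to the clean intersection $\tpsi(\tT)\cap\tT$, which is a union of orbit tori of the residual torus, and feed it into ``the cohomological lower bound underlying Theorem~\ref{thm:weighted}''. But Theorem~\ref{thm:weighted} is quoted as a black box giving a lower bound on \emph{transversal} intersection points; its statement carries no clean-intersection refinement, so this step uses information about its proof that is not available. The paper avoids this in Proposition~\ref{prop:displace}(iii): near each of the $r$ intersection tori one takes a $K$-equivariant Weinstein neighborhood modeled on a neighborhood of the zero section in $T^\ast K$, and composes the lifted Hamiltonian with a compactly supported Hamiltonian that resolves that torus into exactly $2^{(d-1)-n}$ transverse points (an optimal perturbation of the zero section of the cotangent bundle of a torus). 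This yields an honest $\tvphi\in\Ham(\CP(1,\bm),\omega_{\bm})$ with exactly $r\cdot 2^{(d-1)-n}$ transverse intersection points, to which Theorem~\ref{thm:weighted} applies directly, giving $r\cdot 2^{(d-1)-n}\ge 2^{d-1}$, i.e. $r\ge 2^n$ --- the same inequality you wrote, but justified using only the stated form of the theorem.
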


\begin{remark} \label{rmk:gen-monotone}
An appropriate generalization of part (ii) of Theorem~\ref{thm:basic} to weighted projective spaces would imply, 
by the same straightforward application of our symplectic reduction remark, that on any compact monotone
toric symplectic manifold $(M, \omega)$ we also have that
\[
\psi (T) \cap R \ne \emptyset  \,, \ \forall\, \psi \in \Ham (M,\omega)\,.
\]
This has been proved by Alston-Amorim~\cite{Alston-Amorim} using the methods
developed by Fukaya, Oh, Ohta and Ono in~\cite{FOOO1, FOOO2, FOOO}.
\end{remark}

With the help of another basic example, i.e. the total space of the line bundle $\Oo(-1)\to\CP^1$
(cf. Section~\ref{sec:Fano}), our remarks can also be used to prove interesting non-displaceability results on 
certain non-monotone and/or non-Fano examples, such as:
\begin{itemize}
\item[-] a continuum of non-displaceable torus fibers on $M = \CP^2 \sharp 2 \overline{\CP}^2$ with a certain
non-monotone symplectic form (cf. Example 10.3 in~\cite{FOOO} and \S~\ref{ssec:cont}).
\item[-] a particular non-displaceable torus fiber on the family of non-Fano examples given by 
Hirzebruch surfaces $H_k := \Proj (\Oo (-k) \oplus \C) \to \CP^1$, with $2\leq k \in\N$
(cf. Example 10.1 in~\cite{FOOO} and \S~\ref{ssec:Hirz}).
\item[-] a continuum of non-displaceable torus fibers on a certain non-Fano toric symplectic $4$-manifold considered 
by McDuff (cf. \S~2.1 in~\cite{McDuff2} and \S~\ref{ssec:Dusa}).
\end{itemize}
For all non-displaceable torus fibers in these examples, we can also use our remarks to obtain an appropriate
optimal lower bound for the number of transversal intersection points, which for these Lagrangian $2$-tori is $4$.

\begin{remark} \label{rmk:Dusa}
Regarding this last example, McDuff shows in~\cite{McDuff2} that it gives rise, under a repeated \emph{wedge}
construction, to a monotone symplectic toric $12$-manifold with a continuous interval of Lagrangian torus fibers that
cannot be displaced by her method of probes~\cite{McDuff}. Hence, as she points out, these fibers ``may perhaps
be non-displaceable by Hamiltonian isotopies, even though, according to~\cite{FOOO1}, their Floer homology vanishes".
Although we can use the remarks in this paper to prove non-displaceability of the relevant torus fibers in the non-Fano
toric symplectic $4$-manifold (cf. \S~\ref{ssec:Dusa}), so far we have not been able to do the same for the corresponding
monotone toric symplectic $12$-manifold.
\end{remark}

\subsection*{Acknowledgements} We thank Matthew Strom Borman, Cheol-Hyun Cho, Rui Loja Fernandes, Dusa McDuff 
and Chris Woodward, for several useful discussions regarding this paper.

We thank IMPA and IST for the warm hospitality during the preparation of this work.

Part of these results were first presented by the first author at the Workshop on Symplectic Geometry and Topology,
Kyoto University, Japan, February 14--18, 2011. He thanks the organizers, Kenji Fukaya and Kaoru Ono, for the 
opportunity to participate in such a wonderful event.

%

\section{First Remark: Cartesian Product}
\label{sec:rmk-1}

This first remark, on cartesian products, is motivated by Alston's result in part (iii) of Theorem~\ref{thm:basic} 
and can be used to remove its dimension restriction, i.e. prove Theorem~\ref{thm:even}. We will also use
it in combination with our second remark, on symplectic reduction.

\subsection{Combinatorial Floer Invariant} \label{sec:CFI}

Let $P\subset(\R^n)^\ast$ be a moment polytope defined by
\[
\ell_i (x) := \langle x, \nu_i \rangle + a_i \geq 0\,,\ 
i=1,\ldots,d.
\]
Each vector $\nu_i = ((\nu_i)_1, \ldots (\nu_i)_n)\in\Z^n$ is the primitive integral normal
to the facet $F_i$ of the polytope $P$, and $d$ is the number of facets of $P$. We will say 
that $P$ is {\bf even} if $d$ is even.

Let $CF^n$ be the vector space of dimension $2^n$ generated over
$\Z_2$ by the following $2^n$ symbols:
\[
(\eps_1,\ldots, \eps_n)\ \text{with}\ \eps_k =\pm 1\,,\ k=1,\ldots,n.
\]
Consider the linear map
\[
\partial_P: CF^n \to CF^n
\]
defined on basis elements by
\[
\pP (\eps) = \sum_{i=1}^d (-1)^{\nu_i} \eps
\]
where $\eps = (\eps_1, \ldots, \eps_n)$ and
\[
(-1)^{\nu_i} \eps = \left((-1)^{(\nu_i)_1} \eps_1, \ldots ,
(-1)^{(\nu_i)_n} \eps_n \right).
\]
\begin{prop}\label{prop:square}
$\pP^2 = 0$ if $d$ is even and $\pP^2 = \rm{id}$ if $d$ is odd.
\end{prop}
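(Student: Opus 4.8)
The plan is to compute $\pP^2$ directly on an arbitrary basis element $\eps = (\eps_1,\ldots,\eps_n)$ and then read off the answer from a parity count. First I would record the elementary structural fact that each operator $\eps \mapsto (-1)^{\nu_i}\eps$ is a permutation of the $2^n$ basis vectors depending only on the reduction of $\nu_i$ modulo $2$; these permutations commute, being componentwise sign flips, and each is an involution since $(-1)^{2(\nu_i)_k}=1$. Writing exponents additively, $(-1)^{\nu_i}(-1)^{\nu_j}\eps = (-1)^{\nu_i+\nu_j}\eps$ with componentwise addition in $\Z^n$, so the composite action factors through the abelian group $(\Z/2)^n$. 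This commutativity is precisely what the proof exploits.

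Expanding by linearity I would write
\[
\pP^2(\eps) = \sum_{i=1}^d \sum_{j=1}^d (-1)^{\nu_i+\nu_j}\eps,
\]
and split the double sum into its diagonal part ($i=j$) and its off-diagonal part ($i\neq j$). On the diagonal, $(-1)^{\nu_i+\nu_i}\eps = (-1)^{2\nu_i}\eps = \eps$, so the diagonal contributes $\eps$ exactly $d$ times; over $\Z_2$ this equals $(d \bmod 2)\,\eps$. For the off-diagonal part, the decisive observation is that commutativity gives $(-1)^{\nu_i+\nu_j}\eps = (-1)^{\nu_j+\nu_i}\eps$, so the two ordered pairs $(i,j)$ and $(j,i)$ associated to each unordered pair $\{i,j\}$ contribute identical basis vectors. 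Each such contribution therefore appears exactly twice and hence vanishes in the $\Z_2$-vector space $CF^n$.

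Collecting the two parts yields $\pP^2(\eps) = (d \bmod 2)\,\eps$, which is $0$ when $d$ is even and $\eps$ when $d$ is odd; since $\eps$ was an arbitrary basis element, this gives $\pP^2 = 0$ or $\pP^2 = \mathrm{id}$ respectively, as claimed. I do not expect a genuine obstacle here: the statement is entirely formal once the action is seen to factor through $(\Z/2)^n$. The only point demanding care is the bookkeeping of the double sum over $\Z_2$, namely keeping straight that the diagonal records the parity of $d$ while the symmetric off-diagonal pairs annihilate one another.
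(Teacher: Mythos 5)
Your proof is correct and follows the same route as the paper's: expand $\pP^2(\eps)$ as the double sum $\sum_{i,j}(-1)^{\nu_i+\nu_j}\eps$, observe that the off-diagonal terms pair up via the symmetry $(i,j)\leftrightarrow(j,i)$ and hence cancel over $\Z_2$, while the diagonal contributes $d\,\eps$, giving $0$ or $\mathrm{id}$ according to the parity of $d$. Your explicit remark that the sign-flip operators commute and factor through $(\Z/2)^n$ is a nice clarification of what the paper uses implicitly, but the argument is the same.
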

\begin{proof}
\begin{align}
\pP(\pP \eps) & = \pP \left(\sum_{i=1}^d (-1)^{\nu_i} \eps \right)
=  \sum_{i=1}^d \pP\left((-1)^{\nu_i} \eps\right) \notag \\
& = \sum_{i,j=1}^d (-1)^{\nu_j}\left((-1)^{\nu_i}\eps \right)
= \sum_{i,j=1}^d (-1)^{\nu_i + \nu_j}\eps \notag \\
& = 2 \left(\sum_{i<j} (-1)^{\nu_i + \nu_j} \eps \right)
+ \sum_{i=1}^d (-1)^{2\nu_i} \eps \notag \\
& = 0 + \sum_{i=1}^d \eps = d \eps \notag
\end{align}
\end{proof}

\begin{defn}\label{defn:HF0}
The Floer invariant $HF(P)$ of an even integral polytope $P$ is defined by
\[
HF(P) := \dim\,\ker(\pP) - \dim\,\im(\pP).
\]
\end{defn}

\begin{prop}\label{prop:kunneth}
If $P = P' \times P''$, with $P'$ and $P''$ even integral polytopes, then
\[
HF(P) = HF (P') \cdot HF (P'')\,.
\]
\end{prop}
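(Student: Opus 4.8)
The plan is to recognize the boundary map of a product polytope as a tensor-product differential and then invoke the Künneth formula over the field $\Z_2$. Write $P'\subset(\R^{n'})^\ast$ with facet normals $\nu'_1,\ldots,\nu'_{d'}$ and $P''\subset(\R^{n''})^\ast$ with facet normals $\nu''_1,\ldots,\nu''_{d''}$, so that $n=n'+n''$. The first step is the combinatorial observation that the facets of $P=P'\times P''$ are exactly the products $F'_i\times P''$ and $P'\times F''_j$, whose primitive integral normals are $(\nu'_i,0)$ and $(0,\nu''_j)$ in $\Z^{n'}\times\Z^{n''}=\Z^n$. In particular $d=d'+d''$, which is even since $d'$ and $d''$ are, so $P$ is again an even polytope as required by Definition~\ref{defn:HF0}.

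Next I would use the natural identification $CF^n\cong CF^{n'}\otimes CF^{n''}$ sending $(\eps',\eps'')$ to $\eps'\otimes\eps''$. Since the normal $(\nu'_i,0)$ flips only the first $n'$ coordinates and $(0,\nu''_j)$ only the last $n''$, a direct computation gives
\[
\pP(\eps'\otimes\eps'') = (\pPp\,\eps')\otimes\eps'' + \eps'\otimes(\pPd\,\eps'')\,,
\]
that is, $\pP = \pPp\otimes\mathrm{id} + \mathrm{id}\otimes\pPd$ is precisely the tensor-product differential. As a consistency check, squaring this expression and working over $\Z_2$ kills the cross term $2\,\pPp\otimes\pPd$, leaving $(\pPp)^2\otimes\mathrm{id} + \mathrm{id}\otimes(\pPd)^2 = 0$ by Proposition~\ref{prop:square} applied to $P'$ and $P''$.

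Because $\pP^2=0$ we have $\im(\pP)\subseteq\ker(\pP)$, so $HF(P)=\dim\ker(\pP)-\dim\im(\pP)$ is just $\dim H(P)$, the dimension of the homology $H(P):=\ker(\pP)/\im(\pP)$; the same holds for $P'$ and $P''$. It then suffices to prove $\dim H(P)=\dim H(P')\cdot\dim H(P'')$, which is the algebraic Künneth formula for the tensor product of two complexes of $\Z_2$-vector spaces. I would establish it directly by the standard splitting argument over a field: choose decompositions $CF^{n'}\cong H'\oplus A'$ and $CF^{n''}\cong H''\oplus A''$ into the homology part (on which the differential vanishes) and an acyclic complement. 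Expanding $CF^{n'}\otimes CF^{n''}$ into four summands, the piece $H'\otimes H''$ carries the zero differential and contributes $\dim H'\cdot\dim H''$, while each of the remaining three summands is acyclic, since over a field the tensor product of an acyclic complex with any complex is acyclic. This yields $H(P)\cong H'\otimes H''$ and hence the claimed multiplicativity.

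The only genuinely substantive step is the combinatorial identification of $\pP$ with the tensor-product differential; once that is in hand, the result is purely formal homological algebra. The one point deserving care is that $CF^n$ is an \emph{ungraded} complex rather than a $\Z$-graded one, so I cannot quote the graded Künneth theorem verbatim. The splitting argument above, however, is insensitive to grading and applies unchanged in the ungraded setting over the field $\Z_2$, which is why I expect it to be the cleanest route.
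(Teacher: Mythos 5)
Your proof is correct and follows essentially the same route as the paper's: identify the facet normals of $P'\times P''$ as $(\nu'_i,0)$ and $(0,\nu''_j)$, recognize $\pP$ as the tensor-product differential $\pPp\otimes\mathrm{id}+\mathrm{id}\otimes\pPd$ on $CF^{n'}\otimes CF^{n''}$, and conclude by K\"unneth. The only difference is one of detail: where the paper invokes ``a standard application of the K\"unneth formula in this context,'' you supply the ungraded, characteristic-two splitting argument explicitly, which is exactly the right way to justify that citation.
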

\begin{proof}
Suppose $P'\subset\R^{n'}$ and $P''\subset\R^{n''}$ have normal vectors
\[
\nu'_1, \ldots, \nu'_{d'} \in \Z^{n'}
\quad\text{and}\quad
\nu''_1, \ldots, \nu''_{d''} \in \Z^{n''}\,.
\]
Then $P = P' \times P'' \subset \R^{n'} \times \R^{n''} = \R^n$, 
$n = n' + n''$, has normal vectors
\[
(\nu'_1,0), \ldots, (\nu'_{d'},0),
(0,\nu''_1), \ldots, (0,\nu''_{d''}) \in \Z^{n'} \times \Z^{n''}
= \Z^n\,.
\]
Hence, the linear map $\pP$ on $CF^n = CF^{n'}\otimes CF^{n''}$ is 
given by
\[
\pP (\eps'\otimes\eps'') = (\pPp\eps')\otimes\eps'' + 
\eps'\otimes(\pPd\eps'')\,.
\]
The result of the proposition follows by a standard application
of the K\"unneth formula in this context.
\end{proof}

Using this proposition and the fact that $P \times P$ always has an even number of facets, 
we can define the Floer invariant of any integral polytope.
\begin{defn}\label{defn:HF}
The Floer invariant $HF(P)$ of an integral polytope $P$ is defined by
\[
HF(P) := \sqrt{\dim\,\ker(\pPP) - \dim\,\im(\pPP)}\,.
\]
\end{defn}

\subsection{Relation with Lagrangian Floer Homology}
\label{sec:LFH}

Suppose now that $P$ is a Fano Delzant polytope and let $M_P$ denote its associated smooth Fano 
toric variety. This means that when defining $P\subset(\R^n)^\ast$ by
\[
\ell_i (x) := \langle x, \nu_i \rangle + a_i \geq 0\,,\ 
i=1,\ldots,d.
\]
we can assume that $a_1 = \cdots = a_d = 1$ and $(M_P,\omega_P)$ is monotone: $[\omega_P] = 2\pi c_1 (M_P)$.

Denote by $R_P$ the Lagrangian real part of $(M_P,\omega_P)$  and by $T_P$ the Lagrangian torus over
$0\in\interior(P)$. Assuming that the Lagrangian Floer homology $HF(R_P, T_P; \Z_2)$ is well defined, we
have the following theorem. 

\begin{thm}\label{thm:main} 
\[
\dim HF(R_P, T_P; \Z_2) = HF (P).
\]
\end{thm}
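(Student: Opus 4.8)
The plan is to show that the Floer chain complex $CF(R_P, T_P; \Z_2)$ is, as a $\Z_2$-vector space, canonically identified with $CF^n$, and that under this identification the Floer differential agrees with the combinatorial operator $\pP$; the theorem then follows formally.

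First I would pin down the generators. As explained via action-angle coordinates in the introduction, $R_P \pitchfork T_P$ consists of exactly the $2^n$ transverse intersection points $(0, \eps_1, \ldots, \eps_n)$ with $\eps_k = \pm 1$. Taking these as generators gives a tautological isomorphism $CF(R_P, T_P; \Z_2) \cong CF^n$, sending the intersection point labelled by signs $\eps_k$ to the symbol $\eps = (\eps_1, \ldots, \eps_n)$.

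Second, and this is the heart of the matter, I would compute the Floer differential by counting rigid (Maslov index $1$) holomorphic strips with boundary on $R_P \cup T_P$. The key technique is to exploit the anti-symplectic involution $\tau$: for a $\tau$-anti-invariant almost complex structure, $\tau$ fixes $R_P$ and preserves $T_P$, so one can double each such strip across its boundary on $R_P$ to produce a $\tau$-anti-invariant Maslov index $2$ holomorphic disk with boundary on $T_P$, and this doubling is a bijection onto the $\tau$-symmetric such disks. Invoking the Cho--Oh classification of Maslov index $2$ disks bounded by a toric fiber, these disks are in bijection with the facets $F_i$ of $P$, one basic disk per primitive normal $\nu_i$. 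Because $P$ is Fano Delzant with $a_1 = \cdots = a_d = 1$ and $T_P$ lies over the origin (so $T_P$ is the monotone fiber), all $d$ basic disks have equal symplectic area and therefore all contribute at the same filtration level. A short analysis of the boundary behaviour then shows that the disk attached to facet $i$ joins the intersection point $\eps$ to $(-1)^{\nu_i}\eps$, so that over $\Z_2$ the differential sends $\eps \mapsto \sum_{i=1}^d (-1)^{\nu_i}\eps = \pP(\eps)$. Thus the Floer differential is exactly $\pP$.

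Finally I would conclude. Since the differential equals $\pP$, the Floer homology $HF(R_P, T_P; \Z_2)$ is well defined precisely when $\pP^2 = 0$, i.e.\ (by Proposition~\ref{prop:square}) when $d$ is even, and in that case
\[
\dim HF(R_P, T_P; \Z_2) = \dim\ker(\pP) - \dim\im(\pP) = HF(P)
\]
by Definition~\ref{defn:HF0}; moreover, since for an even polytope Proposition~\ref{prop:kunneth} gives $HF(P\times P) = HF(P)^2$, this is consistent with the general Definition~\ref{defn:HF}. (For consistency in the general setting one may instead apply the identification to $M_{P\times P} = M_P \times M_P$, whose real part and central fiber are the products of those of $M_P$, and then take square roots.) The step I expect to be the main obstacle is the second one: making the doubling correspondence rigorous --- transversality and regularity of the doubled disks, the bijection between index-$1$ strips and $\tau$-symmetric index-$2$ disks, and the exclusion of spurious constant or multiply-covered contributions --- together with verifying that monotonicity genuinely forces all $d$ facets, rather than some energy-favoured subset, to enter the differential.
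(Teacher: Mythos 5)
Your proposal follows essentially the same route as the paper's proof: the generators are the $2^n$ points of $R_P\pitchfork T_P$, and the differential is computed by passing from index-one strips to the Cho--Oh Maslov-index-two Blaschke-product disks via reflection across the real part --- exactly Alston's method, which the paper invokes (together with the Cho and Cho--Oh classification) for general Fano $M_P$. The only substantive difference is one of emphasis: the step you label ``a short analysis of the boundary behaviour'' (that the disk attached to facet $i$ joins $\eps$ to $(-1)^{\nu_i}\eps$) is where the paper concentrates its effort, using Cox's global homogeneous coordinates $[z_1,\ldots,z_d]$ and the monomial maps $\phi$, $\varphi$ to translate a sign flip in the $i$-th homogeneous coordinate into componentwise multiplication by $(-1)^{\nu_i}$ in torus coordinates, so you should be aware that this ``short analysis'' is the real content of the identification of the Floer differential with $\pP$.
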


\begin{cor} \label{cor:main}
\[
\sharp (\psi(R_P)\pitchfork T_P) \geq HF(P)\,,\ \forall\, \psi\in\Ham(M_P,\omega_P)\,.
\]
\end{cor}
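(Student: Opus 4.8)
The plan is to deduce the corollary from Theorem~\ref{thm:main} via the standard principle that Lagrangian Floer homology, when well defined, is invariant under Hamiltonian isotopy and bounds from below the number of transversal intersection points of the two Lagrangians. First I would fix $\psi\in\Ham(M_P,\omega_P)$ and, after composing with a $C^\infty$-small Hamiltonian perturbation that does not alter the Hamiltonian isotopy class, arrange that $\psi(R_P)$ is transverse to $T_P$. Such a perturbation exists by a generic transversality argument and can only decrease the count $\sharp(\psi(R_P)\pitchfork T_P)$, so it suffices to establish the inequality for transverse $\psi$.

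For such transverse $\psi$, the Floer chain complex $CF(\psi(R_P),T_P;\Z_2)$ is by definition the $\Z_2$-vector space freely generated by the intersection points $\psi(R_P)\cap T_P$, so its dimension equals $\sharp(\psi(R_P)\pitchfork T_P)$. The next step is to invoke the Hamiltonian invariance of Lagrangian Floer homology: since $\psi\in\Ham(M_P,\omega_P)$, there is a canonical isomorphism $HF(\psi(R_P),T_P;\Z_2)\cong HF(R_P,T_P;\Z_2)$. Because the homology is a subquotient of the chain complex, its dimension is at most that of the chain complex, yielding
\[
\sharp(\psi(R_P)\pitchfork T_P) = \dim CF(\psi(R_P),T_P;\Z_2) \geq \dim HF(\psi(R_P),T_P;\Z_2) = \dim HF(R_P,T_P;\Z_2).
\]
Applying Theorem~\ref{thm:main}, which identifies $\dim HF(R_P,T_P;\Z_2)=HF(P)$, then completes the argument.

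The main obstacle is not this formal dimension count but the analytic input it presupposes: one must know that $HF(R_P,T_P;\Z_2)$ is both well defined and Hamiltonian invariant in this monotone toric setting, where $R_P$ need not be orientable and the usual obstructions to defining Floer homology, namely disc bubbling and compactness of the relevant moduli spaces, must be controlled. Working over $\Z_2$ circumvents the orientability issues, and the monotonicity of $(M_P,\omega_P)$ is precisely what is expected to guarantee the well-definedness already hypothesized in Theorem~\ref{thm:main}. Granting that hypothesis, the corollary follows immediately from the invariance property, and no further work beyond the perturbation and subquotient argument above is required.
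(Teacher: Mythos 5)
Your formal core---Hamiltonian invariance of Lagrangian Floer homology plus the fact that the homology dimension is bounded above by the number of generators of the chain complex---is the same standard principle the paper relies on, but there is a genuine gap: you invoke the group $HF(R_P,T_P;\Z_2)$ directly, and this group need not exist. In the basic version of Floer theory used here, well-definedness fails precisely when the number of facets $d$ of $P$ is odd: by Proposition~\ref{prop:square} the differential then satisfies $\pP^2=\mathrm{id}\neq 0$, because the Maslov-index-two holomorphic disks with boundary on $T_P$ do not cancel modulo $2$. Contrary to your closing remark, monotonicity of $(M_P,\omega_P)$ does not cure this; it is the parity of $d$, not monotonicity, that controls whether $\partial^2=0$. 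And the odd case is exactly where Corollary~\ref{cor:main} is put to work in the paper: the simplex of $\CP^2$ ($d=3$) in \S~\ref{sec:EX}, and the simplex of $\CP^{2n}$ ($d=2n+1$) behind Remark~\ref{rmk:even} and Theorem~\ref{thm:even}. So your argument, as written, only establishes the corollary under a strictly stronger hypothesis than the one the paper needs, and that hypothesis fails in the motivating applications.

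The missing idea is a doubling trick, which is the entire content of the paper's proof. The polytope $P\times P$ always has an even number of facets, and the minimal Maslov number of $R_{P\times P}=R_P\times R_P$ equals that of $R_P$, so $HF(R_{P\times P},T_{P\times P};\Z_2)$ is well defined whatever the parity of $d$, and Theorem~\ref{thm:main} applies to $P\times P$. If some $\psi\in\Ham(M_P,\omega_P)$ satisfied $\sharp(\psi(R_P)\pitchfork T_P)<HF(P)$, then $\psi\times\psi\in\Ham(M_P\times M_P,\omega_P\times\omega_P)$ would satisfy
\[
\sharp\bigl[(\psi\times\psi)(R_{P\times P})\pitchfork T_{P\times P}\bigr]
=\bigl(\sharp(\psi(R_P)\pitchfork T_P)\bigr)^2
<\bigl(HF(P)\bigr)^2=HF(P\times P)\,,
\]
and your invariance-plus-subquotient argument, applied now in the product where it is legitimate, would force $\dim HF(R_{P\times P},T_{P\times P};\Z_2)<HF(P\times P)$, contradicting Theorem~\ref{thm:main}. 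In short: your proof is essentially fine when $d$ is even, but to obtain the corollary in the stated generality you must square the problem first. A minor additional point: your preliminary perturbation step is both unnecessary (the bound is asserted for transverse intersections) and incorrectly justified, since a $C^\infty$-small perturbation of a degenerate intersection can split it into several transverse points, so the transverse count can increase, not only decrease.
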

\begin{proof}[Proof of the Corollary]
If $P$ is a Fano Delzant polytope, then $P\times P$ is an even Fano
Delzant polytope to which Theorem~\ref{thm:main} applies. Suppose that
\[
\sharp (\psi(R_P)\cap T_P) < HF(P)\,.
\]
Then $(\psi\times\psi)\in\Ham(M_P\times M_P, \omega_P \times \omega_P)$
would be such that
\[
\sharp\left[(\psi\times\psi)(R_P\times R_P) \,\cap\, (T_P \times T_P) \right]
= \left(\sharp (\psi(R_P) \,\cap\, T_P) \right)^2 < 
(HF(P))^2 = HF (P\times P)\,,
\]
which would imply that
\[
\dim HF(R_{P\times P}, T_{P\times P}; \Z_2 ) < HF (P\times P)\,.
\]
This contradicts Theorem~\ref{thm:main}. As before, we are assuming that
$HF(R_{P\times P}, T_{P\times P}; \Z_2 )$ is well defined.
\end{proof}

\begin{remark}
The meaning of $HF(R_P, T_P; \Z_2)$ being well defined depends on the technical tools one is
willing to use. If one uses only a basic version of Lagrangian Floer homology, then the condition that the minimal Maslov number of $R_P$ is bigger than two has to be enforced. Under this assumption, it can be proved as in \cite{Alston,Cho1,Cho-Oh,Oh} that, for the standard complex structure $J$ on $M_P$, the boundary of the moduli space of holomorphic strips with dimension one is given by broken strips and holomorphic disks with Maslov index two and boundary in $T_P$. Moreover, the linearized Cauchy-Riemann operator $D_u\bar\partial_{J}$ is surjective for all $J$-holomorphic strips $u$ of Maslov index one or two. It follows from this that $HF(R_P, T_P; \Z_2)$ is well defined whenever $d$ is even. As a matter of fact, if $d$ is even then the holomorphic disks with index two and boundary in $T_P$ counted by $\partial_P^2$ cancel out when we work with $\Z_2$ coefficients; this is the key point in the proof of Proposition \ref{prop:square}. In particular, $HF(R_{P\times P}, T_{P\times P}; \Z_2 )$ is well defined whatever is the parity of $d$. Indeed, the minimal Maslov number of $R_{P\times P}$ equals the minimal Maslov number of $R_P$.

With this hypothesis on $R_P$ the set of examples that are covered by Corollary~\ref{cor:main} is a bit restrictive. If one uses
a sophisticated version of Lagrangian Floer homology, such as the one developed by Fukaya, Oh, 
Ohta and Ono in~\cite{FOOO1, FOOO2, FOOO}, then Corollary~\ref{cor:main} covers a lot more
ground (see~\cite{Alston-Amorim}).
\end{remark}

\begin{remark} \label{rmk:even}
Even with only basic Lagrangian Floer homology, $HF(R_{P\times P}, T_{P\times P}; \Z_2 )$ is indeed 
well defined when $P$ is a simplex, i.e. $M_P = \CP^n$, and Corollary~\ref{cor:main} can be combined 
with a combinatorial computation of $HF(P\times P)$ to prove Theorem~\ref{thm:even}, hence removing 
the dimension restriction of Alston's result. We will not present that combinatorial computation here since
(i) it is presented in~\cite{Alston-Amorim} and (ii), as we will see, our remark on symplectic reduction 
can also be used to easily remove this restriction.
\end{remark}

The proof of Theorem~\ref{thm:main} is a simple combination of the 
following two ingredients:
\begin{itemize}
\item[(i)] the characterization by Cho~\cite{Cho1} and Cho-Oh~\cite{Cho-Oh}
of the holomorphic discs on Fano toric manifolds, that are relevant for
the differential on the Lagrangiam Floer complex, as Blaschke products;
\item[(ii)] the existence of \emph{global homogeneous coordinates} on
any smooth toric variety $M_P$, not just on $\C P^n$ (see \S 4.4 of 
Cox~\cite{Cox}).
\end{itemize}
We will now discuss some details of (ii), and how they contribute to the
proof of Theorem~\ref{thm:main}.

From the well known quotient representation
\[
M_P = \left(\C^d \setminus Z_P \right) / G_P
\]
we get a map
\begin{align}
\C^d \setminus Z_P & \longrightarrow M_P \notag \\
(z_1,\ldots,z_d) & \longmapsto [z_1,\ldots,z_d] \notag
\end{align}
defining homogeneous coordinates on $M_P$, where homogeneous here
is with respect to the action of the torus $G_P$. Note that the canonical
anti-holomorphic involution $\tau_P : M_P \to M_P$ is given in these
homogeneous coordinates as
\begin{equation}\label{eq:tau}
\tau_P ([z_1,\ldots,z_d]) = [\oz_1, \ldots, \oz_d]\,.
\end{equation}

Let $m_1,\ldots,m_\ell$ be the integral points in $P$, i.e.
\[
\left\{ m_1, \ldots, m_\ell\right\} = P \cap \Z^n\,,
\]
and consider the map
\[
\phi : \C^d \setminus Z_P \to \C P^{\ell-1}
\]
given by
\begin{equation}\label{eq:phi}
\phi (z_1,\ldots,z_d) = [z^{[m_1]}, \ldots,z^{[m_\ell]}]\,, 
\end{equation}
where
\[
z^{[m]} = \prod_{i=1}^d z_i^{\ell_i (m)}\,.
\]
It turns out that $\phi$ induces a well defined embedding
\begin{align}
M_P & \longrightarrow \C P^{\ell -1} \notag \\
[z_1,\ldots,z_d] & \longmapsto [z^{[m_1]},\ldots,z^{[m_\ell]}] \,. \notag
\end{align}
If one restricts this map to $(\C^\ast)^n \subset M_P$, one gets
an embedding
\[
\varphi: (\C^\ast)^n \to \C P^{\ell-1}
\]
given in the standard coordinates of $(\C^\ast)^n$ as
\begin{equation}\label{eq:varphi}
\varphi (t_1, \ldots, t_n) = [t^{m_1}, \ldots, t^{m_\ell}]
\end{equation}
where 
\[
t^{m_k} = \prod_{i=1}^n t_i^{(m_k)_i} \,.
\]

Since
\[
R_P \cap (\C^\ast)^n = (\R^\ast)^n \quad\text{and}\quad
T_P = (S^1)^n \subset (\C^\ast)^n
\]
we get that
\[
R_P \cap T_P = \left\{ (\eps_1,\ldots,\eps_n): \eps_i = \pm 1 \right\} \,.
\]
Any such intersection point can be written in homogeneous coordinates
as
\[
[\eps_1,\ldots, \eps_d]\,,\ \eps_i = \pm 1\,.
\]
As Alston~\cite{Alston} does in the case of $\C P^n$, using the 
work of Cho~\cite{Cho1} and Cho-Oh~\cite{Cho-Oh}, the characterization 
of relevant holomorphic discs as Blaschke products shows that the
Lagrangian Floer differential is given in homogeneous coordinates
by
\[
\pP ([\eps_1,\ldots, \eps_d]) = 
\sum_{i=1}^d [\eps_1,\ldots, -\eps_i, \ldots, \eps_d] \,.
\]

We can now use the maps $\phi$ and $\varphi$, defined by~(\ref{eq:phi})
and~(\ref{eq:varphi}), to compute
$\pP (\eps_1, \ldots, \eps_n)$ directly. Note that if
\[
\phi([\eps_1,\ldots, \eps_i, \ldots, \eps_d]) =
[\eps^{[m_1]}, \ldots, \eps^{[m_\ell]}]
\]
then
\begin{align}
\phi([\eps_1,\ldots, -\eps_i, \ldots, \eps_d]) 
& = [(-1)^{\ell_i(m_1)}\eps^{[m_1]}, \ldots, 
(-1)^{\ell_i(m_\ell)}\eps^{[m_\ell]}] \notag \\
& = [(-1)^{\langle m_1,\nu_i\rangle+a_i}\eps^{[m_1]}, \ldots, 
(-1)^{\langle m_\ell,\nu_i\rangle+a_i}\eps^{[m_\ell]}] \notag \\
& = [(-1)^{\langle m_1,\nu_i\rangle}\eps^{[m_1]}, \ldots, 
(-1)^{\langle m_\ell,\nu_i\rangle}\eps^{[m_\ell]}] \,.\notag 
\end{align}
On the other hand, if
\[
\varphi (t_1, \ldots, t_n) = [t^{m_1}, \ldots, t^{m_\ell}] =
[\eps^{[m_1]}, \ldots, \eps^{[m_\ell]}]
\]
then
\begin{align}
\varphi ((-1)^{(\nu_i)_1} t_1, \ldots, (-1)^{(\nu_i)_n} t_n)
& = [(-1)^{\langle m_1,\nu_i\rangle} t^{m_1}, \ldots,
(-1)^{\langle m_\ell,\nu_i\rangle} t^{m_\ell}] \notag \\
& = [(-1)^{\langle m_1,\nu_i\rangle} \eps^{[m_1]}, \ldots,
(-1)^{\langle m_\ell,\nu_i\rangle} \eps^{[m_\ell]}] \notag \\
& = \phi([\eps_1,\ldots, -\eps_i, \ldots, \eps_d]) \,.\notag
\end{align}
Hence,
\begin{align}
& \pP ([\eps_1,\ldots, \eps_d]) = 
\sum_{i=1}^d [\eps_1,\ldots, -\eps_i, \ldots, \eps_d] \notag \\
\Leftrightarrow \qquad &
\pP (\eps_1,\ldots, \eps_n) = 
\sum_{i=1}^d \left((-1)^{(\nu_i)_1}\eps_1,\ldots, 
(-1)^{(\nu_i)_n}\eps_n\right) \,.\notag
\end{align}

\subsection{Examples}
\label{sec:EX}

\begin{example}
Let $P=[-1,1]\subset\R$ with $M_P = \C P^1 = S^2$. In this case 
$R_P$ is a meridian circle and $T_P$ is the equator. $P$ has normals
$\nu_1 = 1$ and $\nu_2 = -1$. $CF$ is a $2$-dimensional vector space
with basis $e_1 = (1)$ and $e_2 = (-1)$. The differential $\pP$ is given 
by
\[
\pP (\eps) = ((-1)^1 \eps) + ((-1)^{-1} \eps) = 2 (-\eps) = 0 \,.
\]
Hence,
\[
HF(P) = 2\,.
\]
\end{example}

\begin{prop}\label{prop:even-nu}
If P is an even polytope such that whenever $\nu$ is a normal $-\nu$ is also
a normal, i.e. such that $P$ is \emph{symmetric}, then $HF(P) = 2^n$.
\end{prop}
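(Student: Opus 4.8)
The plan is to show that under the symmetry hypothesis the differential $\pP$ vanishes identically on $CF^n$. Once this is established, $\ker(\pP) = CF^n$ has dimension $2^n$ and $\im(\pP) = 0$, so by Definition~\ref{defn:HF0} we immediately get $HF(P) = 2^n - 0 = 2^n$. Thus the whole proposition reduces to the single claim $\pP = 0$.

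The observation that makes this work is that the action of a normal vector on $CF^n$ depends only on the parities of its components. Concretely, for any $\nu\in\Z^n$ and any basis element $\eps$,
\[
(-1)^{\nu}\eps = (-1)^{-\nu}\eps\,,
\]
since $(-1)^{(\nu)_k} = (-1)^{-(\nu)_k}$ for each $k$. Hence $\nu$ and $-\nu$ induce exactly the same map on $CF^n$. Since $P$ is symmetric, its normals partition into pairs $\{\nu,-\nu\}$, and each such pair consists of two genuinely distinct vectors: if $\nu = -\nu$ then $2\nu = 0$, i.e.\ $\nu=0$, which is impossible because each $\nu_i$ is primitive. In particular $d$ is automatically even, consistent with the evenness assumption and with $\pP^2=0$ from Proposition~\ref{prop:square}.

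Grouping the sum defining $\pP$ according to this pairing, the combined contribution of each pair is
\[
(-1)^{\nu}\eps + (-1)^{-\nu}\eps = 2\,(-1)^{\nu}\eps = 0
\]
over $\Z_2$. Summing over the $d/2$ pairs gives $\pP(\eps) = 0$ for every basis element $\eps$, hence $\pP = 0$, which is exactly the claim needed.

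I do not expect a serious obstacle here: the argument rests entirely on the elementary identity $(-1)^m = (-1)^{-m}$, which forces the contributions of $\nu$ and $-\nu$ to coincide and therefore to cancel modulo $2$. The only point meriting an explicit (and trivial) check is that the pairing $\nu \leftrightarrow -\nu$ never collapses a normal onto itself, which is precisely what primitivity of the $\nu_i$ guarantees.
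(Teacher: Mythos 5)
Your proof is correct and follows essentially the same route as the paper: the paper's one-line proof invokes the pairing of each normal $\nu$ with $-\nu$, exactly as in its $\CP^1$ example, where $(-1)^{\nu}\eps + (-1)^{-\nu}\eps = 2\,(-1)^{\nu}\eps = 0$ over $\Z_2$, giving $\pP = 0$ and hence $HF(P) = \dim\ker(\pP) - \dim\im(\pP) = 2^n$. Your additional checks (primitivity forces $\nu \neq -\nu$, so the pairing is free and $d$ is automatically even) are correct and merely make explicit what the paper leaves implicit.
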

\begin{proof}
As in the example above, we always have $\pP = 0$ under these conditions.
\end{proof}

\begin{remark} Under the assumption that $HF(R_{P}, T_{P}; \Z_2 )$ is well defined,
it follows from this proposition that if $P$ is symmetric then
\[
\sharp (\psi (T_P) \cap R_P) \geq 2^n \quad\text{for any $\psi\in\Ham (M_P, \omega_P)$.}
\]
In fact, as we will see in Proposition~\ref{prop:sym}, one can easily use our remark on symplectic 
reduction to remove the assumption and give another proof of this result.
\end{remark}

\begin{example}
When $n=2$ there are exactly two even Fano polytopes with the property of
Proposition~\ref{prop:even-nu}: the Fano square, corresponding to 
$\C P^1 \times \C P^1$, and the Fano hexagon, corresponding to $\C P^2$
blown up at $3$ points.
\end{example}

\begin{example}
Let $P\subset \R^2$ be the Fano simplex corresponding to $M_P = \C P^2$.
Since $P$ has $3$ facets, an odd number, we will consider the even Fano 
Delzant polytope $P\times P\subset\R^4$, whose $6$ facets have normals
\[
\nu_1 = (1,0,0,0)\,,\ \nu_2 = (0,1,0,0)\,,\ \nu_3 = (-1,-1,0,0)\,,\ 
\nu_4 = (0,0,1,0)\,,\ \nu_5 = (0,0,0,1)
\]
\[
\text{and}\quad
\nu_6 = (0,0,-1,-1)\,. 
\]
This means that $\pPP:CF^4 \to CF^4$ is given by
\begin{align}
\pPP (\eps_1,\eps_2,\eps_3,\eps_4) & = (-\eps_1,\eps_2,\eps_3,\eps_4) + 
(\eps_1,-\eps_2,\eps_3,\eps_4) + (-\eps_1,-\eps_2,\eps_3,\eps_4)
\notag \\
& + (\eps_1,\eps_2,-\eps_3,\eps_4) + (\eps_1,\eps_2,\eps_3,-\eps_4) +
(\eps_1,\eps_2,-\eps_3,-\eps_4)\,. \notag
\end{align}
With this explicit formula it is not hard to check that
\[
HF (P\times P) = \dim\,\ker(\pPP) - \dim\,\im(\pPP) = 10 - 6 = 4\,.
\]
Hence, 
\[
HF(P) = \sqrt{4} = 2
\]
and applying Corollary~\ref{cor:main} we conclude that
\[
\sharp (\psi(\R P^2)\cap \T^2) \geq 2\,,
\]
for any $\psi\in\Ham(\C P^2)$. This estimate is known to be optimal (see for example the end of
section $5$ in~\cite{Alston-Amorim}).
\end{example}

\section{Second Remark: Symplectic Reduction} 
\label{sec:rmk-2}

Here we will state some elementary general facts in the particular context of symplectic
toric manifolds. 

Let $(\tM,\tomega)$ be a symplectic toric manifold of dimension $2N$ with
$\tilde\T$-action generated by a moment map 
\[
\tmu:\tM\to\tP\subset (\R^N)^\ast\,.
\]
As before, given $x\in\interior (\tP)$, let $\tT_x := \tmu^{-1} (x)$ denote the corresponding $\tilde\T$-orbit,
a Lagrangian torus in $\tM$, and let $\tR$ denote the real part of $\tM$, i.e. the Lagrangian submanifold
given by the fixed point set of the canonical anti-symplectic involution $\ttau:\tM\to\tM$, characterized by
$\tmu \circ \ttau = \tmu$. Recall that
\[
\ttau (\ttheta\cdot\tp) = -\ttheta \cdot (\ttau(\tp))\,,\ \forall\ \ttheta\in\tilde\T\,, \ \tp\in\tM\,,
\quad\text{and}\quad 
\ttau (\tT_x) = \tT_x\,.
\]
Moreover, $(\tT_x)^{\ttau} = \tT_x \cap \tR$ and this set, the real part of a regular
$\tilde\T$-orbit, is discrete with $2^N$ points.

Let $K\subset\tilde\T$ be a subtorus of dimension $N-n$ determined by an inclusion of Lie algebras
\[
\iota:\R^{N-n} \to \R^N\,.
\]
The moment map for the induced action of $K$ on $\tM$ is
\[
\tmu_K = \iota^\ast \circ\tmu : \tM \to (\R^{N-n})^\ast\,.
\]
Let $c\in\tmu_K (\tM) \subset  (\R^{N-n})^\ast$ be a regular value and assume that $K$ acts freely on the 
level set
\[
Z := \tmu_K^{-1} (c) \subset \tM\,.
\]
Then, the reduced space $(M := Z/K, \omega)$ is a symplectic toric manifold of dimension $2n$ with
$\T:=\tilde\T /K$-action generated by a moment map
\[
\mu : M \to P \subset (\R^n)^\ast \cong \ker (\iota^\ast)
\]
that fits in the commutative diagram
\begin{equation*}  
\begin{CD}
\tM \supset Z @>{\tmu}>> \tP\subset (\R^N)^\ast \\
@V{\pi}VV            @AA{}A\\
M @>{\mu}>>  P \subset (\R^n)^\ast 
\end{CD}
\end{equation*}
where $\pi$ is the quotient projection and the vertical arrow on the right is the inclusion 
$(\R^n)^\ast \cong \ker (\iota^\ast) \subset (\R^{N-n})^\ast$.

Recall that the reduced symplectic form $\omega$ is characterized by $\pi^\ast \omega = \tomega |_Z$.
Note that given $T_x := \mu^{-1}(x)$, with $x\in \interior (P)\subset\interior(\tP)$, we have that
\[
\pi^{-1} (T_x) = \tT_x\,.
\]
Moreover, $\ttau (Z) = Z$, $Z^{\ttau} = Z \cap \tR$ and $\ttau$ induces the canonical anti-symplectic
involution $\tau:M\to M$ via
\[
\pi \circ \ttau = \tau \circ \pi\,.
\]
Let $p\in R:= M^\tau$. Then a simple counting argument shows that
\[
\sharp (\pi^{-1} (p) \cap \tR) = 2^{N-n}\,.
\]

\begin{lemma} \label{lem:lift}
Let $\psi\in\Ham (M,\omega)$. Then there is $\tpsi\in\Ham(\tM, \tomega)$ such that
$\tpsi (Z) = Z$ and
\[
\pi (\tpsi (\tp)) = \psi (\pi (\tp))\,,\ \forall\, \tp\in Z\,.
\]
\end{lemma}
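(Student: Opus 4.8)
The plan is to lift at the level of generating Hamiltonians, using the two standard facts of symplectic reduction: that a $K$-invariant Hamiltonian on $\tM$ has a flow which preserves the level set $Z=\tmu_K^{-1}(c)$, and that on $Z$ this flow descends to the reduced Hamiltonian dynamics on $M=Z/K$.

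First I would write $\psi$ as the time-one map of a Hamiltonian isotopy: choose a smooth family $H_t:M\to\R$, $t\in[0,1]$, whose Hamiltonian flow $\psi_t$ satisfies $\psi_0=\mathrm{id}$ and $\psi_1=\psi$. Pulling back along the quotient projection gives the family $H_t\circ\pi:Z\to\R$, which is $K$-invariant precisely because $\pi$ is constant on $K$-orbits. Next I would extend this to a $K$-invariant family $\tilde H_t:\tM\to\R$ with $\tilde H_t|_Z=H_t\circ\pi$. Since $c$ is a regular value of $\tmu_K$ and $K$ acts freely on $Z$, there is a $K$-invariant tubular neighborhood of $Z$ together with a $K$-equivariant projection onto $Z$; composing $H_t\circ\pi$ with this projection and multiplying by a $K$-invariant bump function equal to $1$ on $Z$ (for instance a function of $\tmu_K$ supported near $c$) produces the desired $\tilde H_t$, whose support has compact closure whenever $Z$ is compact.

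The two reduction facts then finish the argument. Because $\tilde H_t$ is $K$-invariant, the components of $\tmu_K$ Poisson-commute with it, so $d\tmu_K(X_{\tilde H_t})=0$ and the flow $\tilde\psi_t$ of $X_{\tilde H_t}$ preserves every level set of $\tmu_K$, in particular $Z$. Restricting to $Z$, reduction of Hamiltonian dynamics says that $d\pi$ carries $X_{\tilde H_t}|_Z$ to the Hamiltonian vector field on $M$ of the function determined by $\tilde H_t|_Z=H_t\circ\pi$, namely $X_{H_t}$; hence $\pi\circ\tilde\psi_t=\psi_t\circ\pi$ on $Z$. Setting $\tilde\psi:=\tilde\psi_1$ then gives both required properties, $\tilde\psi(Z)=Z$ and $\pi\circ\tilde\psi=\psi\circ\pi$ on $Z$.

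The main obstacle is not the reduction bookkeeping, which is routine, but ensuring that $\tilde\psi$ is genuinely an element of $\Ham(\tM,\tomega)$, i.e. that the lifted flow is complete and globally defined rather than merely defined near $Z$. This is exactly where the $K$-invariant cutoff enters: in the cases of interest the reduced level set $Z$ is compact, so $\tilde H_t$ can be taken compactly supported, its flow is complete, and $\tilde\psi_1\in\Ham(\tM,\tomega)$.
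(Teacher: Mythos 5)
Your proof is correct and takes essentially the same approach as the paper, which simply picks an arbitrary smooth extension $\tilde{h}_t:\tM\to\R$ of $h_t\circ\pi:Z\to\R$ and asserts that its Hamiltonian flow has the desired properties. Your additional choices --- making the extension $K$-invariant so that Noether's theorem immediately gives preservation of $Z$, and cutting off near $Z$ so that the flow is complete and its time-one map genuinely lies in $\Ham(\tM,\tomega)$ when $\tM$ is non-compact --- are refinements that supply details the paper leaves implicit rather than a different argument.
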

\begin{proof}
Given a time dependent hamiltonian $h_t :M\to\R$ let ${\tilde h}_t :\tM\to\R$ be any smooth
extension to $\tM$ of $h_t \circ \pi : Z \to \R$. The Hamiltonian flow generated by ${\tilde h}_t$
has the desired properties.
\end{proof}

\begin{prop} \label{prop:displace} Let $\psi\in\Ham(M,\omega)$, $\tpsi\in\Ham(\tM,\tomega)$
a lift given by the previous lemma and $x\in\interior (P) \subset \interior (\tP)$. Then
\begin{itemize}
\item[(i)] $\sharp (\psi (T_x) \cap R) = r \Rightarrow \sharp (\tpsi (\tT_x) \cap \tR) = r 2^{N-n}$.
\item[(ii)] $\psi (T_x) \cap T_x = \emptyset \Rightarrow \tpsi (\tT_x) \cap \tT_x = \emptyset$.
\end{itemize}
Moreover
\begin{itemize}
\item[(iii)] $\sharp (\psi (T_x) \pitchfork T_x) = r \Rightarrow \exists\  \tvphi\in\Ham(\tM,\tomega)\ \text{such that}\ \,
 \sharp (\tvphi (\tT_x) \pitchfork \tT_x) = r 2^{N-n}$.
\end{itemize}
\end{prop}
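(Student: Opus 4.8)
The plan is to reduce all three assertions to a single set-theoretic identity for the lift, after which (i) and (ii) are immediate and (iii) becomes a localized Morse-theoretic perturbation problem. First I would establish that
\[
\tpsi(\tT_x) = (\pi|_Z)^{-1}\bigl(\psi(T_x)\bigr),
\]
the preimage taken inside $Z$. Since $\tT_x=\pi^{-1}(T_x)\subset Z$ and $\tpsi(Z)=Z$, we have $\tpsi(\tT_x)\subset Z$; and for $\tq\in Z$, the point $\tq$ lies in $\tpsi(\tT_x)$ iff $\pi(\tpsi^{-1}(\tq))\in T_x$. Applying Lemma~\ref{lem:lift} to $\tp=\tpsi^{-1}(\tq)\in Z$ gives $\pi(\tpsi^{-1}(\tq))=\psi^{-1}(\pi(\tq))$, so the condition is exactly $\pi(\tq)\in\psi(T_x)$. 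In particular $\tpsi(\tT_x)$, being a $\pi$-preimage, is $K$-invariant. Intersecting with $\tT_x=(\pi|_Z)^{-1}(T_x)$ yields $\tpsi(\tT_x)\cap\tT_x=(\pi|_Z)^{-1}(\psi(T_x)\cap T_x)$, so $\psi(T_x)\cap T_x=\emptyset$ forces $\tpsi(\tT_x)\cap\tT_x=\emptyset$, which is (ii). For (i), a point of $\tpsi(\tT_x)\cap\tR$ lies in $Z\cap\tR$ and projects, via $\pi\circ\ttau=\tau\circ\pi$, into $R$; conversely every point of $\pi^{-1}(p)\cap\tR$ with $p\in\psi(T_x)\cap R$ lies in $\tpsi(\tT_x)$ by the identity. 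Hence $\tpsi(\tT_x)\cap\tR$ fibers over $\psi(T_x)\cap R$ with each fiber equal to $\pi^{-1}(p)\cap\tR$, which has $2^{N-n}$ points by the counting recorded before the lemma, giving $\sharp(\tpsi(\tT_x)\cap\tR)=r\,2^{N-n}$.

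For (iii), suppose $\psi(T_x)\pitchfork T_x$ consists of $r$ transverse points $p_1,\dots,p_r\in T_x$. By the identity, $\tpsi(\tT_x)\cap\tT_x=\bigsqcup_{k=1}^r O_k$, where $O_k:=(\pi|_{\tT_x})^{-1}(p_k)$ is a single $K$-orbit, hence an embedded torus $\cong\T^{N-n}$. I would then verify that this is a clean (Morse--Bott) intersection, nondegenerate transverse to the orbit directions. Concretely, in a Weinstein neighbourhood of $\tT_x\cong\T^N$ with zero section $\tT_x$ and angle coordinates split into $M$-directions $\theta\in\T^n$ and $K$-directions $\phi\in\T^{N-n}$, the $K$-invariance of $\tpsi(\tT_x)$ forces it to be the graph of $dS(\theta)$ for a generating function $S$ depending only on $\theta$, namely the pullback of the generating function $S_M$ of $\psi(T_x)$ under the reduction $(\theta,\phi)\mapsto\theta$. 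Transversality of $\psi(T_x)\pitchfork T_x$ at $p_k$ then says that $\mathrm{Hess}_\theta S$ is nondegenerate at the point $\theta^{(k)}$ over $p_k$, while $S$ is constant along $\phi$; so the only degeneracy of the intersection is along the orbit $O_k$.

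Finally I would resolve each orbit by a localized perturbation. Choose a smooth $f$ supported in disjoint tubular neighbourhoods of the $O_k$ whose restriction to each $O_k\cong\T^{N-n}$ is a perfect Morse function with exactly $2^{N-n}$ critical points (such $f$ exists since the $O_k$ are finitely many disjoint tori). Let $\rho$ be the time-$1$ flow of the cutoff Hamiltonian $\eps\,(f\circ\mathrm{pr})$ in the Weinstein neighbourhood, which acts near the zero section by $(q,p)\mapsto(q,p+\eps\,df(q))$, and set $\tvphi:=\rho\circ\tpsi\in\Ham(\tM,\tomega)$. Then near $O_k$ the Lagrangian $\tvphi(\tT_x)$ is the graph of $d(S+\eps f)$, so its intersection with $\tT_x$ there is the critical set of $S+\eps f$; for small $\eps$ the implicit function theorem gives exactly $2^{N-n}$ nondegenerate (hence transverse) points, clustered near the critical points of $f|_{O_k}$. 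Away from $\bigsqcup O_k$ the sets $\tpsi(\tT_x)$ and $\tT_x$ are disjoint, hence remain disjoint under a $C^1$-small perturbation by compactness, so no spurious intersections appear, and summing yields $\sharp(\tvphi(\tT_x)\pitchfork\tT_x)=r\,2^{N-n}$.

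The main obstacle is this last step: converting the clean $K$-orbit intersections into \emph{transverse} ones with the \emph{exact} count $2^{N-n}$ per orbit and no extra points, and realizing the perturbation as a genuine ambient Hamiltonian isotopy rather than a mere Lagrangian deformation. The essential structural input that makes it work is that $\tpsi(\tT_x)$ is $K$-invariant with the orbit-transverse nondegeneracy above, so all degeneracy is confined to the tori $O_k$, where a perfect Morse function on $\T^{N-n}$---whose number of critical points equals the total Betti number $\sum_i\binom{N-n}{i}=2^{N-n}$---cuts each orbit into precisely $2^{N-n}$ transverse intersection points.
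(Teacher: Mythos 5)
Parts (i) and (ii) of your proposal are correct: the identity $\tpsi(\tT_x)=(\pi|_Z)^{-1}\bigl(\psi(T_x)\bigr)$ holds and is essentially a repackaging of the paper's own point-chasing argument, with the fibre count $\sharp\bigl(\pi^{-1}(p)\cap\tR\bigr)=2^{N-n}$ supplying the multiplicity in (i).

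The gap is in (iii), exactly at the step you yourself flagged as the main obstacle. You assert that $K$-invariance forces $\tpsi(\tT_x)$, in a Weinstein neighbourhood of $\tT_x$, to be the graph of $dS(\theta)$ with $S$ the pullback of a generating function $S_M$ of $\psi(T_x)$. No such $S_M$ exists in general: transversality of $\psi(T_x)$ and $T_x$ at $p_k$ does not make $\psi(T_x)$ graphical over $T_x$ near $p_k$, since $T_{p_k}\psi(T_x)$ can be any Lagrangian plane meeting $T_{p_k}T_x$ only at the origin, including planes containing fibre directions of the Weinstein chart. Indeed, the paper's own local model takes $T_x\cap\Uu\cong\{v=0\}$ and $\psi(T_x)\cap\Uu\cong\{u=0\}$ in $(\R^{2n},du\wedge dv)$: there the second Lagrangian is vertical, not a graph. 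Since both your verification of the Morse--Bott structure and your identification of $\rho(\tpsi(\tT_x))\cap\tT_x$ with $\mathrm{Crit}(S+\eps f)$ rest on this graph presentation, the argument as written does not go through; note that $\rho(\tpsi(\tT_x))\cap\tT_x=\tpsi(\tT_x)\cap\mathrm{graph}(-\eps\,df)$, which you cannot evaluate without knowing how $\tpsi(\tT_x)$ sits in the chart. What is salvageable: cleanness along each $O_k$ follows from your identity with no graph assumption, because both tangent spaces contain $TO_k=\ker d\pi|_Z$ and their $d\pi$-images are $T\psi(T_x)$ and $TT_x$, which meet trivially downstairs, so the tangent intersection is exactly $TO_k$. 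The missing ingredient --- and the paper's actual route --- is the $K$-equivariant clean-intersection neighbourhood normal form: a neighbourhood of each $O_k$ is $K$-equivariantly symplectomorphic to $\Vv_1\times\Vv_2\subset(\R^{2n},\omega_{\mathrm{st}})\times(T^\ast K,\omega_{\mathrm{can}})$, with $\tT_x$ and $\tpsi(\tT_x)$ corresponding to $\{v=0\}\times\{\text{$0$-section}\}$ and $\{u=0\}\times\{\text{$0$-section}\}$ respectively. In that product model your mechanism is exactly right: perturb by $\mathrm{id}\times\varphi$, where $\varphi$ is a cut-off fibrewise translation by $\eps\,dg$ for $g$ a perfect Morse function on $K\cong\T^{N-n}$, producing precisely $2^{N-n}$ transverse points per orbit and no others. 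So your final step should be run in this normal form rather than in a putative graph presentation over $\tT_x$; establishing that normal form (the paper's ``standard equivariant neighborhood theorems'') is what your proof is missing.
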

\begin{remark}
McDuff's method of probes~\cite{McDuff} can be seen as a particular case of (ii).
\end{remark}
\begin{proof}
To prove (i), suppose that $\tq\in\tpsi(\tT_x)\cap\tR$ and let $\tp = \tpsi^{-1} (\tq) \in \tT_x$.
Then $p=\pi(\tp)\in T_x$ and
\[
\psi(p) = \psi(\pi(\tp)) = \pi (\tpsi(\tp)) = \pi(\tq) = \pi (\ttau (\tq)) = \tau (\pi(\tq)) = \tau(q) = \tau (\psi(p))\,.
\]
Hence, $q=\psi (p) \in R$ and so $q\in \psi (T_x) \cap R$.

Moreover, given $q\in \psi (T_x) \cap R$ let $\tq \in \pi^{-1} (q) \cap \tR$. Let $p = \psi^{-1} (q) \in T_x$
and $\tp = \tpsi^{-1} (\tq)$. Then
\begin{align}
& \pi (\tp) = \pi (\tpsi^{-1} (\tq)) = \psi^{-1} (\pi(\tq)) = \psi^{-1} (q) = p \notag \\
\Rightarrow\  & \tp \in \pi^{-1} (p) \subset \pi^{-1} (T_x) = \tT_x \notag \\
\Rightarrow\  & \tq = \tpsi (\tp) \in \tpsi (\tT_x) \,.\notag
\end{align}
Hence, $\tq\in\tpsi(\tT_x) \cap \tR$ and there are exactly $2^{N-n}$ such $\tq$'s.

To prove (ii), suppose that $\tq\in\tpsi(\tT_x)\cap\tT_x$ and let $\tp = \tpsi^{-1} (\tq) \in \tT_x$.
Then $p=\pi(\tp), q = \pi(\tq) \in T_x$ and 
\[
\psi (p) = \psi (\pi (\tp)) = \pi (\tpsi(\tp)) = \pi (\tq) = q
\]
which implies that $q\in \psi (T_x) \cap T_x$.

To prove (iii), note that since $T_x$ and $\psi (T_x)$ intersect transversely at $r$ points in $M$,
say $p_1,\ldots,p_r\in M$, we have that $\tT_x$ and $\tpsi (\tT_x)$ intersect in $Z$ in a Morse-Bott way
along the $r$ orbits of the subtorus $K\subset\tilde\T$ given by $\pi^{-1}(p_1), \ldots,\pi^{-1}(p_r)\subset \tM$.
Standard equivariant neighborhood theorems in symplectic geometry imply that a sufficiently small 
neighborhood $\tilde\Uu\subset\tM$ of each of these isotropic tori is $K$-equivariantly symplectomorphic
to $\Vv_1 \times \Vv_2 \subset (\R^{2n}, \omega_{\rm st}) \times (T^\ast K, \omega_{\rm can})$, where
$\Vv_1\subset\R^{2n}$ is a neighborhood of the origin, $\Vv_2 \subset T^\ast K$ is a neighborhood of the 
$0$-section, and
\[
\omega_{\rm st} = du \wedge dv = \sum_{j=1}^n du_j \wedge dv_j \quad\text{and} \quad
\omega_{\rm can} = - d\lambda_{\rm can}
\]
are the usual symplectic forms on $\R^{2n}$ and $T^\ast K$ respectively. Moreover, we can identify $\Vv_1$
with a neighborhood $\Uu \subset M$ of the point in $\psi (T_x) \pitchfork T_x$ under consideration and require that
\[
T_x \cap \Uu \cong \left\{ (u,v)\in\Vv_1\,:\ v=0\right\}\quad\text{and}\quad
\psi(T_x) \cap \Uu \cong \left\{ (u,v)\in\Vv_1\,:\ u=0\right\}\,.
\]
Let $\varphi\in\Ham^c (\Vv_2, \omega_{\rm can})$ be such that $\sharp (\varphi (\text{$0$-section})  
\pitchfork (\text{$0$-section}) ) = 2^{N-n}$ (one can clearly construct such optimal displacing Hamiltonians
supported in arbitrarily small neighborhoods of the $0$-section in $T^\ast K$). We can then consider
${\rm id} \times \varphi : \Vv_1 \times \Vv_2 \to \Vv_1 \times \Vv_2$, extend as the identity to $\tM$ and 
compose with $\tpsi$ to obtain a Hamiltonian that perturbs the relevant intersection $K$-orbit into
$2^{N-n}$ transversal intersection points. By doing that at each of the $r$ points in $\psi (T_x) \pitchfork T_x$ 
we obtain the desired $\tvphi\in\Ham(\tM,\tomega)$.
\end{proof}

\begin{cor} \label{cor:displace} \ 
\begin{itemize}
\item[(i)] If $\sharp (\tpsi (\tT_x) \cap \tR) \geq m$ for any $\tpsi\in\Ham (\tM, \tomega)$ then
\[
\sharp (\psi (T_x) \cap R) \geq \frac{m}{2^{N-n}} \,,\ \forall\, \psi\in\Ham (M, \omega)\,.
\]
\item[(ii)] If $\tT_x \subset (\tM, \tomega)$ is non-displaceable, then $T_x \subset (M, \omega)$ is also
non-displaceable.
\item[(iii)]  If $\sharp (\tpsi (\tT_x) \pitchfork \tT_x) \geq m$ for any $\tpsi\in\Ham (\tM, \tomega)$ then
\[
\sharp (\psi (T_x) \pitchfork T_x) \geq \frac{m}{2^{N-n}} \,,\ \forall\, \psi\in\Ham (M, \omega)\,.
\]

\end{itemize}
\end{cor}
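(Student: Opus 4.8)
The plan is to deduce Corollary~\ref{cor:displace} directly from Proposition~\ref{prop:displace} by contraposition, since each part of the corollary is logically the contrapositive (combined with the quantifier structure) of the corresponding part of the proposition. The essential observation is that Lemma~\ref{lem:lift} guarantees that \emph{every} $\psi\in\Ham(M,\omega)$ admits a lift $\tpsi\in\Ham(\tM,\tomega)$ with $\tpsi(Z)=Z$ and $\pi\circ\tpsi=\psi\circ\pi$ on $Z$, so any statement holding for all $\tpsi\in\Ham(\tM,\tomega)$ can be specialized to these lifts and pushed down to $M$.

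For part (ii), I would argue as follows. Suppose $T_x\subset(M,\omega)$ were displaceable, i.e. there exists $\psi\in\Ham(M,\omega)$ with $\psi(T_x)\cap T_x=\emptyset$. By Lemma~\ref{lem:lift} take a lift $\tpsi$; then part (ii) of Proposition~\ref{prop:displace} gives $\tpsi(\tT_x)\cap\tT_x=\emptyset$, contradicting the non-displaceability of $\tT_x$. Hence $T_x$ is non-displaceable. For part (i), I would argue contrapositively in the same spirit: if some $\psi\in\Ham(M,\omega)$ satisfied $\sharp(\psi(T_x)\cap R)<m/2^{N-n}$, set $r:=\sharp(\psi(T_x)\cap R)$, lift to $\tpsi$ via Lemma~\ref{lem:lift}, and apply part (i) of Proposition~\ref{prop:displace} to get $\sharp(\tpsi(\tT_x)\cap\tR)=r\,2^{N-n}<m$, contradicting the hypothesis that $\sharp(\tpsi(\tT_x)\cap\tR)\geq m$ for all $\tpsi$. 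Thus $\sharp(\psi(T_x)\cap R)\geq m/2^{N-n}$ for every $\psi$.

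Part (iii) requires slightly more care because Proposition~\ref{prop:displace}(iii) produces an \emph{auxiliary} Hamiltonian $\tvphi$ rather than asserting a property of the lift $\tpsi$ itself. I would again reason by contraposition: suppose $\sharp(\psi(T_x)\pitchfork T_x)=r<m/2^{N-n}$ for some $\psi$. Part (iii) of the proposition then yields $\tvphi\in\Ham(\tM,\tomega)$ with $\sharp(\tvphi(\tT_x)\pitchfork\tT_x)=r\,2^{N-n}<m$, directly contradicting the hypothesis that the transversal intersection number is at least $m$ for every element of $\Ham(\tM,\tomega)$. One subtlety to record is that the hypotheses in (i) and (iii) quantify over \emph{all} $\tpsi\in\Ham(\tM,\tomega)$, which is exactly what allows us to invoke them on the specific lift or perturbation produced upstream; and in (i) one should note that the count $r\,2^{N-n}$ is an equality in the proposition, so the division by $2^{N-n}$ is clean and no rounding issues arise.

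I do not expect any genuine obstacle here: the content is entirely contained in Proposition~\ref{prop:displace} and Lemma~\ref{lem:lift}, and the corollary is a formal repackaging. The only point demanding attention is bookkeeping of the quantifiers and the direction of each implication, ensuring that ``for all $\tpsi$'' hypotheses downstairs translate correctly into ``for all $\psi$'' conclusions upstairs. This is a routine logical exercise rather than a mathematical difficulty.
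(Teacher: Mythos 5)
Your proposal is correct and coincides with the paper's own (implicit) treatment: the paper states Corollary~\ref{cor:displace} without proof precisely because it follows from Proposition~\ref{prop:displace} and Lemma~\ref{lem:lift} by exactly the contrapositive argument you give, including the observation that part (iii) is covered because the hypothesis quantifies over all of $\Ham(\tM,\tomega)$ and hence applies to the auxiliary $\tvphi$. No gaps; the quantifier bookkeeping you record is the whole content.
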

\begin{remark}
This idea of using symplectic reduction to prove intersection properties of Lagrangian submanifolds
was used by Tamarkin in~\cite{Tamarkin}. It is also present in the work of Borman~\cite{Bo1} on reduction
properties of quasi-morphisms and quasi-states (see also~\cite{Bo2}).
\end{remark}

\subsection{Symplectic Reduction Construction of Toric Manifolds}
\label{ssec:reduction}

Recall that any symplectic toric manifold $(M^{2n},\omega)$ can be constructed as a symplectic reduction
of 
\[
(\tM = \R^{2d}\cong \C^{2d},\tomega = \sum_{j=1}^d dx_j \wedge dy_j)\,, 
\]
where $d$ is the number of facets of the corresponding polytope $P\subset (\R^n)^\ast$. This reduction is 
with respect to the natural action of a subtorus $K\subset \tilde{\T} = \T^d$ of dimension $d-n$, whose Lie algebra 
$\Lie (K) \subset\R^d = \Lie (\T^d)$ is determined as the kernel of the linear map
\begin{equation} \label{eq:beta}
\beta:\R^d \to \R^n\,, \quad \beta (e_j) = \nu_j\,,\ j=1,\ldots,d\,,
\end{equation}
where $\{e_1,\ldots,e_d\}$ is the canonical basis of $\R^d$ and $\nu_1,\ldots,\nu_d \in \Z^n \subset \R^n$ are
the primitive integral interior normals to the facets of the moment polytope $P$. 

When $K = K_1 \times K_2 \subset \T^d$, correspondng to a splitting $\Lie (K) = \Lie (K_1) \times \Lie (K_2) \subset \R^d$,
recall that the principle of reduction in stages tells us that, at appropriate level sets, reduction with respect to the action
of $K\subset\T^d$ is equivalent to 
\begin{itemize}
\item[-] first reducing with respect to $K_1\subset\T^d$, obtaining a symplectic manifold $(M_1, \omega_1)$ with 
Hamiltonian action of $\T^d / K_1$,
\item[-] then reducing $(M_1, \omega_1)$ with respect to $K_2 \subset \T^d / K_1$.
\end{itemize}
This principle will be used repeatedly in the applications considered in the next sections. It is also
the main ingredient in the proof of the following proposition, which in turn will be used in the proof
of Theorem~\ref{thm:monotone} (cf. Proposition~\ref{prop:monotone} 
in \S~\ref{ssec:monotone}).

\begin{prop} \label{prop:factor}
Let $(M^{2n},\omega)$ be a symplectic toric manifold and  $\nu_1, \ldots, \nu_d \in \Z^n$
the primitive integral interior normals to the facets of its moment polytope $P\subset (\R^n)^\ast$. 
Let $m_1, \ldots, m_d \in\N$ be such that
\begin{equation} \label{eq:weighted}
\sum_{j=1}^d m_j \nu_j = 0\,.
\end{equation}
Then $(M^{2n}, \omega)$ can be obtained as a symplectic reduction of the weighted projective space
$\CP (m_1, \ldots, m_d)$.
\end{prop}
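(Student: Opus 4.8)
The plan is to realize both $(M^{2n},\omega)$ and $\CP(m_1,\ldots,m_d)$ as successive symplectic reductions of the linear space $(\C^d,\tomega)$ by nested subtori of $\T^d$, and then to conclude by reduction in stages. Recall from \S\ref{ssec:reduction} that $(M^{2n},\omega)$ is obtained as the symplectic reduction of $(\C^d,\tomega)$ by the subtorus $K\subset\T^d$ of dimension $d-n$ whose Lie algebra is $\Lie(K)=\ker\beta\subset\R^d$, where $\beta(e_j)=\nu_j$ as in~\eqref{eq:beta}. The hypothesis~\eqref{eq:weighted} says precisely that the integral vector $\bm:=(m_1,\ldots,m_d)$ satisfies $\beta(\bm)=\sum_{j=1}^d m_j\nu_j=0$, i.e. $\bm\in\ker\beta=\Lie(K)$. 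Since all the $m_j$ are positive, $\bm\neq 0$ and hence $\bm$ generates a one-dimensional circle subgroup
\[
K_1:=\left\{\left(e^{im_1\theta},\ldots,e^{im_d\theta}\right):\theta\in\R\right\}\subset K\subset\T^d\,.
\]

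First I would reduce by $K_1$. The moment map for the $K_1$-action on $\C^d$ is a multiple of $z\mapsto\tfrac12\sum_{j=1}^d m_j|z_j|^2$; because every weight $m_j$ is positive, its positive level sets are compact and $K_1$ acts locally freely on them. By the very definition used in Theorem~\ref{thm:weighted}, the reduction of $\C^d$ by this weighted $S^1$-action at a positive level is the weighted projective space $\CP(m_1,\ldots,m_d)$, carrying the residual Hamiltonian action of the quotient torus $\T^d/K_1$. One still has to check that the $K_1$-component of the level $c$ that defines $M$ is strictly positive; this holds because the level set $Z=\tmu_K^{-1}(c)$ is nonempty and disjoint from the origin, so $\tfrac12\sum_j m_j|z_j|^2>0$ along $Z$.

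Next I would invoke reduction in stages. As $\T^d$ is abelian, $K_1$ is a closed subgroup of $K$ and $K_2:=K/K_1$ is a torus of dimension $d-n-1$ acting on $\CP(m_1,\ldots,m_d)=\C^d/\!/K_1$. The principle recalled in \S\ref{ssec:reduction} then identifies the reduction of $\C^d$ by $K$ that produces $(M^{2n},\omega)$ with the reduction of the intermediate space $\CP(m_1,\ldots,m_d)$ by $K_2$ at the complementary level. This exhibits $(M^{2n},\omega)$ as a symplectic reduction of $\CP(m_1,\ldots,m_d)$, as claimed.

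I expect the main point to require care to be a small piece of bookkeeping rather than a conceptual difficulty. The version of reduction in stages stated in \S\ref{ssec:reduction} is phrased for a direct-product splitting $K=K_1\times K_2$, while a priori $\bm$ need not be primitive in the lattice $\ker\beta\cap\Z^d$. This is harmless: writing $\bm=g\,w$ with $g=\gcd(m_1,\ldots,m_d)$ and $w\in\Z^d$ primitive, the circle $K_1$ is generated by $w$ and is therefore a direct factor of $\T^d$, hence also of $K$, so the required complementary subtorus $K_2$ exists; moreover passing from $(m_1,\ldots,m_d)$ to $(m_1/g,\ldots,m_d/g)$ changes neither the subgroup $K_1$ nor the resulting reduced space. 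Matching the two levels of reduction and, if desired, identifying the residual torus actions, are then routine verifications; the geometric heart of the argument is simply the observation that the hypothesis~\eqref{eq:weighted} places the weight vector inside $\Lie(K)$.
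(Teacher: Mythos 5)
Your proof is correct and follows essentially the same route as the paper's: realize $M$ as the reduction of $\C^d$ by the subtorus $K$ with $\Lie(K)=\ker\beta$, observe that~\eqref{eq:weighted} places $(m_1,\ldots,m_d)$ in $\Lie(K)$, split off the circle $K_1$ it generates (whose reduction of $\C^d$ is $\CP(m_1,\ldots,m_d)$), and conclude by reduction in stages. The only difference is that you also spell out the level-positivity and the primitivity/splitting bookkeeping, which the paper's one-paragraph proof leaves implicit.
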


\begin{proof}
$(M^{2n},\omega)$ can be obtained as the symplectic reduction of $(\R^{2d}, dx \wedge dy)$
by the action of a subtorus $K\subset\T^d$ with $\Lie(K) = \ker\beta$, where the linear map
$\beta$ is given by~(\ref{eq:beta}). This means in particular that
\[
(k_1, \ldots, k_d) \in \Lie (K) \subset \R^d \Leftrightarrow \sum_{j=1}^d k_j \nu_j = 0 \,,
\]
which together with~(\ref{eq:weighted}) implies that $K$ can be written as $K = K_1 \times K_2$ with
$\Lie (K_1) = \text{span} \{(m_1,\ldots,m_d)\}$. Since the weighted projective space $\CP (m_1, \ldots, m_d)$
is obtained as the symplectic reduction of $(\R^{2d}, dx \wedge dy)$ by the action of $K_1$, one
can use the principle of reduction in stages to conclude the proof.
\end{proof}

\section{First Application: Monotone Cases}
\label{sec:monotone}

We will use the following results stated in Theorem~\ref{thm:basic}:
\begin{itemize}
\item[(i)] The Clifford torus $T^n \subset \CP^n$ is non-displaceable and 
$\sharp (\psi (T_0) \pitchfork T_0) \geq 2^n\,,\ \forall\, \psi \in \Ham (\CP^n,\omega_{FS})$~\cite{BEP,Cho1}.
\item[(ii)] The pair $(\RP^{2n-1}, T^{2n-1})$ is non-displaceable in $\CP^{2n-1}$ and
$\sharp (\psi (T^{2n-1}) \pitchfork \RP^{2n-1}) \geq 2^n$ for any $\psi\in\Ham (\CP^{2n-1})$~\cite{Alston}.
\end{itemize}

\begin{defn} \label{defn:centred}
Recall from Example~\ref{ex:monotone} that any monotone toric symplectic manifold has a unique monotone
torus fiber, called the centered or special torus fiber, which is the Lagrangian torus orbit over the ``center'' of its
moment polytope. A symplectic reduction of a monotone toric symplectic manifold at a level containing its centered
torus fiber, i.e. through the special ``center'' of its moment polytope, will be called a \emph{centered} symplectic 
reduction.
\end{defn}
 
\subsection{Application 1}  \label{ssec:even}
One can easily extend Alston's result to $\CP^{2n}$. In fact, $\CP^{2n}$ can
be obtained as a centered symplectic reduction of $\CP^{2n+1}$ (cf. Figure~\ref{fig:1} for the $n=1$ case) and 
we get that
\[
\sharp (\psi (T^{2n}) \pitchfork \RP^{2n}) \geq \frac{2^{n+1}}{2} = 2^n \quad\text{for any $\psi\in\Ham (\CP^{2n})$.}
\]
Note that the fact that this estimate is known to be optimal for $\CP^2$ implies that Alston's bound for
$\CP^3$ is also optimal.

\begin{figure}[ht]
      \centering
      \includegraphics{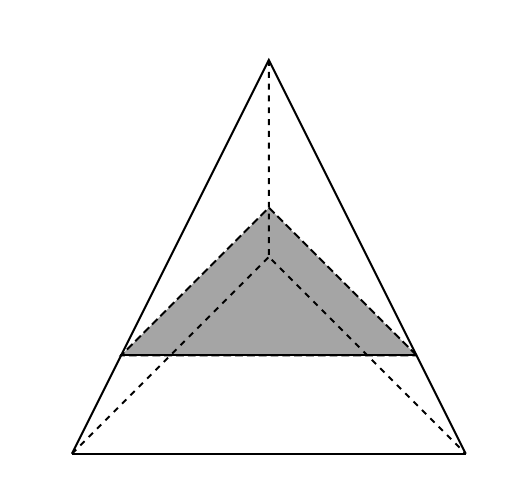}
      \caption{$\CP^2$ as reduction of $\CP^3$.}
      \label{fig:1}
  \end{figure}

\subsection{Application 2} \label{ssec:zero-sum}
Let $(M^{2n},\omega)$ be a monotone toric manifold, $R$ its real part and $T$ 
its special centered torus fiber. Let $\nu_1, \ldots, \nu_d \in \Z^n$ denote the primitive integral interior normals 
to the facets of the moment polytope of $M$.

\begin{prop} \label{prop:zero-sum}
If $\sum_{i=1}^d \nu_i = 0$ then $T$ and the pair $(R,T)$ are non-displaceable. Moreover,
\[
\sharp (\psi (T) \pitchfork T) \geq 2^n \,, \ \forall\, \psi \in \Ham (M,\omega)\,.
\]
\end{prop}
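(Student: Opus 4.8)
The plan is to realize $(M^{2n},\omega)$ as a \emph{centered} symplectic reduction of an honest complex projective space and then transport the statements of Theorem~\ref{thm:basic} for that projective space downstairs through the reduction formalism of \S\ref{sec:rmk-2}. The key observation is that the zero-sum hypothesis $\sum_{i=1}^d \nu_i = 0$ says exactly that $(1,\ldots,1) \in \ker\beta$ for the map $\beta$ of~(\ref{eq:beta}); hence in Proposition~\ref{prop:factor} we may take $m_1 = \cdots = m_d = 1$. Since $\CP(1,\ldots,1) = \CP^{d-1}$ with its monotone Fubini--Study form, this exhibits $(M,\omega)$ as a symplectic reduction of $\tM = (\CP^{d-1},\omega_{FS})$ by a subtorus of $\T^{d-1}$ of dimension $(d-1)-n$. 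In the notation of \S\ref{sec:rmk-2} we thus have $N = d-1$, the fibre $\tT_0$ is the Clifford torus $T^{d-1}$, and $\tR = \RP^{d-1}$.

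First I would verify that this reduction is centered in the sense of Definition~\ref{defn:centred}. Since $M$ is monotone with all $a_i = 1$ and $\CP^{d-1}$ carries its monotone structure, the reduction level should pass through the unique monotone fibre $\tT_0$ upstairs and project it, via $\pi$, to the special centered fibre $T = \mu^{-1}(0)$; similarly $\pi\circ\ttau = \tau\circ\pi$ carries $\tR = \RP^{d-1}$ to $R$. This is the one point requiring genuine care: one must follow the moment-level bookkeeping of the reduction-in-stages argument to confirm that the center of the Fano simplex maps to the center $0 \in P$, so that Theorem~\ref{thm:basic}, stated for the centered fibre of $\CP^{d-1}$, applies verbatim to $\tM$. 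I expect this matching to be the main obstacle; everything after it is a direct citation.

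With the centered reduction in place, the non-displaceability of $T$ and the transversal count follow at once. By part (i) of Theorem~\ref{thm:basic}, the Clifford torus $\tT_0 \subset \CP^{d-1}$ is non-displaceable and $\sharp(\tpsi(\tT_0)\pitchfork\tT_0) \geq 2^{d-1}$ for every $\tpsi \in \Ham(\CP^{d-1})$. Corollary~\ref{cor:displace}(ii) then gives non-displaceability of $T$, while Corollary~\ref{cor:displace}(iii) yields
\[
\sharp(\psi(T)\pitchfork T) \geq \frac{2^{d-1}}{2^{N-n}} = \frac{2^{d-1}}{2^{d-1-n}} = 2^n
\]
for all $\psi\in\Ham(M,\omega)$, which is exactly the asserted bound and in particular forces $\psi(T)\cap T \neq \emptyset$.

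For the pair $(R,T)$ I would argue slightly differently, because part (ii) of Theorem~\ref{thm:basic} supplies only non-emptiness of $\tpsi(\tT_0)\cap\RP^{d-1}$, a bound too weak to survive the division by $2^{N-n}$ in Corollary~\ref{cor:displace}(i). Instead I would use the exact count of Proposition~\ref{prop:displace}(i): given $\psi\in\Ham(M,\omega)$, lift it to $\tpsi\in\Ham(\CP^{d-1})$ by Lemma~\ref{lem:lift} and set $r = \sharp(\psi(T)\cap R)$. Then $\sharp(\tpsi(\tT_0)\cap\RP^{d-1}) = r\,2^{N-n}$, whereas Theorem~\ref{thm:basic}(ii) forces the left-hand side to be at least $1$; hence $r \geq 1$ and $\psi(T)\cap R \neq \emptyset$, completing the proof.
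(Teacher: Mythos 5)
Your proposal is correct and is essentially the paper's own proof: the zero-sum hypothesis feeds Proposition~\ref{prop:factor} with $m_1=\cdots=m_d=1$, realizing $(M,\omega)$ as a centered symplectic reduction of the monotone $\CP^{d-1}$, and then Theorem~\ref{thm:basic} for $\CP^{d-1}$ transported downstairs by Corollary~\ref{cor:displace} (equivalently Proposition~\ref{prop:displace}) gives all three conclusions with the same arithmetic $2^{d-1}/2^{(d-1)-n}=2^n$. Your worry that Corollary~\ref{cor:displace}(i) is too weak for the pair $(R,T)$ is unfounded, since the bound $\sharp(\psi(T)\cap R)\geq 1/2^{N-n}>0$ already forces $\sharp(\psi(T)\cap R)\geq 1$ by integrality, but your substitute argument via the exact count of Proposition~\ref{prop:displace}(i) is the same reasoning in any case.
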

\begin{proof}
Proposition~\ref{prop:factor} and the zero-sum condition on the normals imply that the standard symplectic 
reduction construction of $M$ from $\C^d$ factors through $\CP^{d-1}$. The monotone condition implies that 
this is a centered symplectic reduction and we can apply  Corollary~\ref{cor:displace}.
\end{proof}

\subsection{Application 3} \label{ssec:sym}
The monotone $M = \CP^2 \sharp 3 \overline{\CP}^2$, i.e. the monotone blow-up
of $\CP^2$ at three points, fits the context of Application 2 and can be obtained as a centered symplectic
reduction of $\CP^5$. This means that
\[
\sharp (\psi (T) \pitchfork R) \geq \frac{2^3}{2^3} = 1\quad\text{for any $\psi\in\Ham (M)$.}
\]
It can also be obtained as a centered symplectic reduction of $\CP^2 \times \CP^2$. Since
our Floer combinatorial invariant of $\CP^2 \times \CP^2$ is $4$, this gives the same bound:
\[
\sharp (\psi (T) \pitchfork R) \geq \frac{4}{2^2} = 1\quad\text{for any $\psi\in\Ham (M)$.}
\]
However, if one sees $M$ as a centered symplectic reduction of $\CP^1 \times \CP^1 \times \CP^1$, 
cf. Figure~\ref{fig:2}, one improves the bound to
\[
\sharp (\psi (T) \pitchfork R) \geq \frac{2^3}{2} = 4\quad\text{for any $\psi\in\Ham (M)$,}
\]
which is optimal and coincides with the value of the Floer combinatorial invariant of the hexagon.

\begin{figure}[ht]
      \centering
      \includegraphics{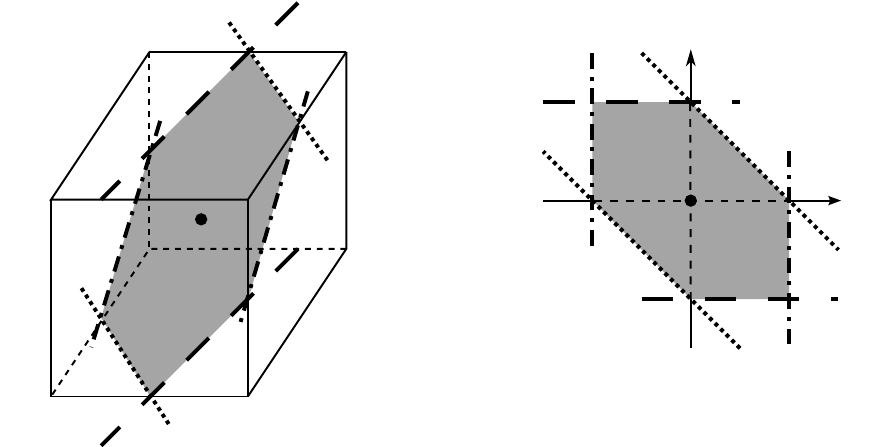}
      \caption{$\CP^2 \sharp 3 \overline{\CP}^2$ as reduction of $\CP^1 \times \CP^1 \times \CP^1$.}
      \label{fig:2}
  \end{figure}

Let us describe the details of this reduction construction. The cube at the left of Figure~\ref{fig:2}, corresponding
to the moment polytope of a monotone $\CP^1 \times \CP^1 \times \CP^1$, can be described by the following
inequalites: 
\begin{align}
x_1 + 1 \geq 0 \qquad & \qquad\quad\qquad\ x_2 + 1 \geq 0 & x_3 + 1 \geq 0 \notag \\
-x_1 + 1 \geq 0 \qquad & \qquad\qquad -x_2 + 1 \geq 0 &  -x_3 + 1 \geq 0 \notag
\end{align}
The monotone $\CP^2 \sharp 3 \overline{\CP}^2$ can be obtained from this toric manifold by centered symplectic
reduction with respect to the circle $S^1 \subset \T^3$ determined by the Lie algebra vector 
$(-1,-1,1)\in\R^3 = \Lie (\T^3)$, at level $-x_1 -x_2 + x_3 = 0$. The quotient $2$-torus $\T^3 / S^1$ acts on the 
reduced manifold and, with respect to its Lie algebra basis given by 
$(1,0,0), (0,1,0) \in \Lie (\T^3 / S^1) \cong \R^3 / \{(-1,-1,1)\}$, the resulting moment polytope is described by the 
above inequalities with $x_3 = x_1 + x_2$, i.e. the hexagon at the right of Figure~\ref{fig:2}.

In fact, the monotone $\CP^2 \sharp 3 \overline{\CP}^2$ is just a particular case of the following
more general proposition.

\begin{prop} \label{prop:sym}
Let $(M^{2n},\omega)$ be a monotone toric manifold, $R$ its real part and $T$ its special centered torus 
fiber. Suppose that the corresponding moment polytope $P\subset\R^n$ is \emph{symmetric}, i.e.
if $\nu\in\Z^n$ is the interior normal to a facet of $P$ then $-\nu$ is also the interior normal
to a facet of $P$. Then
\[
\sharp (\psi (T) \pitchfork R) \geq 2^n \quad\text{for any $\psi\in\Ham (M)$ and this bound is optimal.}
\]
\end{prop}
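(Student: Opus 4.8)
The plan is to realize $M$ as a \emph{centered} symplectic reduction of a product $(\CP^1)^p$ and then transfer to $M$ the sharp intersection bound available on that product, exactly as in the hexagon computation above. Since $P$ is symmetric, its $d$ facet normals occur in opposite pairs $\{\nu_k,-\nu_k\}$, $k=1,\dots,p$, so that $d=2p$ is even. Writing $\C^d=\prod_{k=1}^p\C^2$, with the two coordinates of the $k$-th factor dual to the facets with normals $\nu_k$ and $-\nu_k$, the first step is to check that the diagonal circle in each $\C^2$ belongs to the reduction torus: in the notation of~(\ref{eq:beta}) the vector $f_k=e_k+e_{p+k}$ satisfies $\beta(f_k)=\nu_k+(-\nu_k)=0$, hence lies in $\ker\beta=\Lie(K)$.

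Next I would invoke reduction in stages (\S~\ref{ssec:reduction}). Reducing $\C^{2p}$ by the product $K_1=\prod_k S^1$ of these diagonal circles, each acting with weights $(1,1)$, reduces every $\C^2$ factor to $\CP^1$ and so produces $(\CP^1)^p$; since $K_1\subset K$, a further reduction by the complementary torus $K_2=K/K_1$, of dimension $(2p-n)-p=p-n$, recovers $M$. The monotone hypothesis forces all the defining constants to be equal, so, just as in Proposition~\ref{prop:zero-sum}, the relevant level set passes through the centered fibre and the whole reduction is centered; in particular $T$ is the reduction of the special fibre $\tT$ of $(\CP^1)^p$ and $R$ the reduction of its real part $\tR$.

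The key input is then the product bound on $(\CP^1)^p$. Its moment polytope is the symmetric cube, so by Proposition~\ref{prop:even-nu} the combinatorial Floer invariant equals $2^p$; combined with Theorem~\ref{thm:main} and Corollary~\ref{cor:main} this gives $\sharp(\tpsi(\tT)\pitchfork\tR)\ge 2^p$ for every $\tpsi\in\Ham((\CP^1)^p)$. Applying the transversal form of Corollary~\ref{cor:displace}(i) to the centered reduction above, whose fibre contributes the factor $2^{\,p-n}$, I obtain
\[
\sharp(\psi(T)\pitchfork R)\ \ge\ \frac{2^p}{2^{\,p-n}}\ =\ 2^n\,,\qquad\forall\,\psi\in\Ham(M,\omega)\,.
\]
Optimality is immediate, since the identity already realises $\sharp(T\pitchfork R)=2^n$, the real part of a regular $\T^n$-orbit having exactly $2^n$ points.

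The step I expect to be the genuine obstacle is justifying the product bound $\sharp(\tpsi(\tT)\pitchfork\tR)\ge 2^p$ on $(\CP^1)^p$, rather than the reduction bookkeeping, which is routine. One cannot simply multiply the single-factor bound coming from Theorem~\ref{thm:basic}(iii) for non-product Hamiltonians, so the multiplicativity must be produced through the cartesian-product machinery of \S~\ref{sec:rmk-1}; the delicate point there is the well-definedness of the Floer homology of the pair $(\tR,\tT)$, whose real part $(\RP^1)^p$ has minimal Maslov number $2$. This is precisely why the argument is routed through symplectic reduction and, if one insists on basic Floer homology, why one passes to the doubled polytope as in the proof of Corollary~\ref{cor:main}; alternatively the bound is supplied directly by the package of Fukaya--Oh--Ohta--Ono used in~\cite{Alston-Amorim}.
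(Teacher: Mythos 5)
Your proposal is correct and is essentially the paper's own proof: the paper likewise realizes $M$ as a centered symplectic reduction of a product of copies of $\CP^1$, one for each pair of opposite facets (monotonicity guaranteeing that the reduction is centered), and then applies Corollary~\ref{cor:displace} to divide the product bound $2^{d/2}$ by $2^{d/2-n}$, deducing optimality from $\sharp(T\pitchfork R)=2^n$. Your closing caveat is apt but does not separate you from the paper: the paper simply asserts the input bound on the product of $\CP^1$'s, implicitly resting on the combinatorial invariant of the cube (Proposition~\ref{prop:even-nu}) and its Floer-theoretic interpretation, which is exactly the delicate point you flag.
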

\begin{proof}
The fact that the polytope $P$ is symmetric implies that $M$ can be obtained as a symplectic reduction
of the product of $d$ copies of $\CP^1$, where $2d$ is the number of facets of $P$. The fact that $M$
is monotone implies that all the $\CP^1$'s have the same area and that this is a centered symplectic 
reduction. Hence, we get from Corolary~\ref{cor:displace} that
\[
\sharp (\psi (T) \pitchfork R) \geq \frac{2^d}{2^{d-n}}  = 2^n\,,\ \forall\, \psi\in\Ham (M, \omega)\,.
\]
Since $\sharp (T \pitchfork R) = 2^n$ the bound is indeed optimal.
\end{proof}

\subsection{Application 4} \label{ssec:monotone}

Let $(M^{2n},\omega)$ be a compact monotone symplectic toric manifold and $T$ its special centered torus fiber.
Denote by $\nu_1,\ldots\nu_d\in\Z^n$ the primitive integral interior normals to the facets of its Delzant polytope
$P\subset(\R^n)^\ast$.

\begin{lemma} \label{lem:monotone}
There exists $k\in\{1,\ldots,d\}$ such that
\[
\nu_k + \sum_{j=1, j\ne k}^d m_j \nu_j = 0
\]
with all $m_j \in \N$.
\end{lemma}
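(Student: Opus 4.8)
The plan is to use the fact that $(M^{2n},\omega)$ is a \emph{monotone} toric symplectic manifold, which as recalled in Example~\ref{ex:monotone} means that its moment polytope $P$ can be written with $a_1 = \cdots = a_d = 1$, so that $0\in\interior(P)$ and each facet $F_j$ lies on the hyperplane $\langle x,\nu_j\rangle = -1$. Geometrically, the content of the lemma is that for each choice of a facet normal $\nu_k$, the \emph{negative} $-\nu_k$ lies in the open convex cone positively spanned by the remaining normals $\{\nu_j : j\ne k\}$, and moreover that one can realize this with \emph{positive integer} coefficients. The first task is therefore to prove the existence of positive \emph{real} coefficients, and the second (essentially arithmetic) task is to clear denominators to obtain integers.

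**Positivity of the cone.** The key geometric input I would use is that $0$ is an interior point of $P$ together with the Fano/Delzant structure. Consider the set of normals $\{\nu_1,\ldots,\nu_d\}\subset\Z^n$. Because $P$ is a bounded polytope containing the origin in its interior, the normals must positively span all of $(\R^n)^\ast$; equivalently the convex cone they generate is all of $\R^n$. Fix $k$. I want to show $-\nu_k$ lies in the \emph{relative interior} of the cone generated by $\{\nu_j:j\ne k\}$, i.e.
\[
-\nu_k = \sum_{j\ne k} t_j\,\nu_j \quad\text{with all } t_j > 0 .
\]
The cleanest route is to exploit the combinatorial structure of a Delzant polytope: the vertices of $P$ correspond to the points where $n$ facets meet, and the monotonicity condition $a_j\equiv 1$ forces a rigid relation among the normals meeting at each vertex. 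I expect the main obstacle to be precisely this step --- guaranteeing \emph{strict} positivity of \emph{all} coefficients $t_j$ (not merely nonnegativity, and not missing some of the remaining normals), since a generic choice of $k$ need not automatically put $-\nu_k$ in the interior of the full cone. To handle this I would argue that because $P$ is bounded (so the $\nu_j$ span) and $0$ is strictly interior, for each $k$ the Gale-dual / linear-dependence relations among the $\nu_j$ can be chosen so that $\nu_k$ has a nonzero coefficient, and then a convexity/perturbation argument upgrades nonnegativity to strict positivity after redistributing positive multiples of the full relation $\sum_j c_j\nu_j = 0$ that already exists (the $\nu_j$ being $d$ vectors in $\R^n$ with $d>n$, they satisfy at least $d-n$ independent linear relations).

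**Clearing denominators.** Once positive real $t_j$ are obtained, I would produce a positive \emph{rational} solution: the system $-\nu_k = \sum_{j\ne k} t_j\nu_j$ is a linear system with integer coefficients (the entries of the $\nu_j$), so the set of solutions is a rational affine subspace; since it meets the open positive orthant (a nonempty open set cut out by strict inequalities), it contains a rational point with all coordinates positive. Writing each such rational coordinate over a common denominator $N\in\N$ and multiplying through, I obtain $N\,\nu_k + \sum_{j\ne k}(N t_j)\nu_j = 0$ with $N t_j\in\N$. Finally, to match the stated normalization with coefficient $1$ on $\nu_k$ (the lemma writes $\nu_k + \sum_{j\ne k} m_j\nu_j = 0$), I would either absorb the factor $N$ by allowing $m_k:=N$ and relabeling --- or, if the exact coefficient $1$ on $\nu_k$ is required, observe that the existing global relation $\sum_j c_j\nu_j=0$ (with integer $c_j$ coming from $\ker\beta$ in~(\ref{eq:beta})) can be added in integer multiples to adjust the $\nu_k$-coefficient down to $1$ while keeping the others positive, at the cost of possibly enlarging the chosen index $k$; I expect this bookkeeping to be routine once the strict-positivity step is secured. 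The role of this lemma in the sequel is to feed Proposition~\ref{prop:factor} with a zero-sum relation $\sum_j m_j\nu_j = 0$ having $m_k=1$, so that $(M^{2n},\omega)$ is exhibited as a symplectic reduction of the weighted projective space $\CP(1,\mathbf{m})$ appearing in Theorem~\ref{thm:weighted}.
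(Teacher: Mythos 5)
There is a genuine gap, and it sits exactly at the step you dismiss as ``routine bookkeeping.'' Your outline produces (modulo some hand-waving in the strict-positivity step, where you invoke ``the full relation $\sum_j c_j\nu_j=0$ that already exists'' before having established that a relation with \emph{positive} coefficients exists) a relation $\sum_{j=1}^d M_j\nu_j=0$ with all $M_j$ positive integers. But the lemma demands more: a relation in which some normal $\nu_k$ carries coefficient \emph{exactly} $1$. This normalization is not cosmetic --- it is what lets Proposition~\ref{prop:monotone} factor the reduction through $\CP(1,m_1,\ldots,m_{d-1})$, the weighted projective space with a unit weight to which Theorem~\ref{thm:weighted} applies. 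Your first fix (``absorb the factor $N$ by relabeling'') does not produce the statement at all, and your second fix (adding integer relations to drive one coefficient down to $1$ while keeping the rest positive) is precisely the content of the lemma, not bookkeeping. Moreover, it cannot be carried out with the tools you use, because your argument never genuinely invokes the Delzant (smoothness) condition: everything you use --- boundedness of $P$, $0\in\interior(P)$, rationality of the linear system --- holds equally for the anticanonical polytope of the orbifold $\CP(2,3,5)$, where every integral relation among the three normals is a multiple of $2\nu_1+3\nu_2+5\nu_3=0$, so no relation has a unit coefficient and the conclusion of the lemma is false. Any correct proof must therefore use smoothness in an essential way, and your proposal only mentions it in passing.

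The paper's proof is a two-line argument that gets positivity, integrality, and the unit coefficient simultaneously from exactly the two hypotheses you underuse: compactness and the Delzant condition. Compactness of $M$ means the fan of $P$ is complete, so the lattice vector $\nu:=-\sum_{j=1}^d\nu_j$ lies in the cone spanned by the $n$ normals $\{\nu_j\}_{j\in S}$ meeting at some vertex; smoothness says these $n$ normals form a $\Z$-basis of $\Z^n$, so $-\sum_{j=1}^d\nu_j=\sum_{j\in S}c_j\nu_j$ with the $c_j$ \emph{non-negative integers}. Rearranging gives $\sum_{j\notin S}\nu_j+\sum_{j\in S}(1+c_j)\nu_j=0$, a relation with all coefficients in $\N$ and with coefficient $1$ on every $\nu_k$ with $k\notin S$; such a $k$ exists because $d>n$. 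Note that no real-coefficient cone geometry, rational approximation, or denominator-clearing is needed: the unimodularity of the vertex cone delivers integrality from the start, which is the idea missing from your approach.
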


\begin{proof}
Since $P$ is the moment polytope of a compact toric manifold, the support of its associated fan is the whole $\R^n$.
In particular, every lattice vector $\nu\in\Z^n$ belongs to a cone of the fan determined by some vertex of $P$, which
means that it can be written as a non-negative integral linear combination of the primitive integral interior normals to 
the $n$ facets that meet at that vertex. The Lemma follows by taking
\[
\nu = - \sum_{j=1}^d \nu_j\,.
\]
\end{proof}

\begin{prop} \label{prop:monotone}
On any compact monotone toric symplectic manifold $(M, \omega)$ the special centered Lagrangian
torus fiber $T$ is non-displaceable. Moreover,
\[
\sharp (\psi (T) \pitchfork T) \geq 2^n \,, \ \forall\, \psi \in \Ham (M,\omega)\,.
\]
\end{prop}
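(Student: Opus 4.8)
The plan is to realize $(M,\omega)$ as a \emph{centered} symplectic reduction of a weighted projective space of the form $\CP(1,\bm)$, and then to transport the non-displaceability and intersection bound of Theorem~\ref{thm:weighted} down to $M$ via Corollary~\ref{cor:displace}.

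First I would invoke Lemma~\ref{lem:monotone} to obtain an index $k\in\{1,\ldots,d\}$ and natural numbers $m_j$, $j\ne k$, with
\[
\nu_k+\sum_{j\ne k}m_j\nu_j=0.
\]
Setting $w_k=1$ and $w_j=m_j$ for $j\ne k$, this is precisely a relation $\sum_{j=1}^d w_j\nu_j=0$ with all $w_j\in\N$ and one weight equal to $1$. Proposition~\ref{prop:factor} then exhibits $(M,\omega)$ as a symplectic reduction of the weighted projective space $\CP(w_1,\ldots,w_d)$. Permuting the homogeneous coordinates so that the weight $1$ comes first, this is a space $\CP(1,\bm)$ of complex dimension $d-1$ (here $\bm$ denotes the remaining $d-1$ weights), exactly the class covered by Theorem~\ref{thm:weighted}.

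The key step is then to check that this reduction can be taken to be \emph{centered} in the sense of Definition~\ref{defn:centred}. Here the monotone hypothesis is essential, whereas Lemma~\ref{lem:monotone} used only compactness: since all defining inequalities of $P$ have $a_i=1$, I would verify that in the reduction-in-stages factorization $\C^d\to\CP(1,\bm)\to M$ the levels can be chosen so that the intermediate weighted projective space carries its monotone form $\omega_\bm$ (so that Theorem~\ref{thm:weighted} applies) and the final stage passes through the special centered fiber $T_0=\mu^{-1}(0)$ of $\CP(1,\bm)$, reducing it to the special centered fiber $T$ of $M$. Concretely, the monotone normalization forces the center of $P$ to be the image under the quotient projection $\pi$ of the center of $P_\bm$, so that $T=\pi(T_0)$ and the reduction is centered; one also notes that the second-stage torus action is free along this centered level set, which lies in the smooth locus of the orbifold $\CP(1,\bm)$, so that Proposition~\ref{prop:displace} applies there.

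Granting this, Theorem~\ref{thm:weighted} gives non-displaceability of $T_0$ together with $\sharp(\tpsi(T_0)\pitchfork T_0)\ge 2^{d-1}$ for all $\tpsi\in\Ham(\CP(1,\bm),\omega_\bm)$. Since the reduction from $\CP(1,\bm)$ (complex dimension $N=d-1$) to $M$ (complex dimension $n$) is by a subtorus of dimension $N-n=d-1-n$, Corollary~\ref{cor:displace}(ii) yields non-displaceability of $T$ and Corollary~\ref{cor:displace}(iii) yields
\[
\sharp(\psi(T)\pitchfork T)\ge\frac{2^{d-1}}{2^{(d-1)-n}}=2^n,\quad\forall\,\psi\in\Ham(M,\omega),
\]
as required. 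I expect the centering verification above to be the main obstacle: it is exactly where the monotone hypothesis (rather than mere compactness of $M$) is used, and it is what singles out the monotone weighted projective space of Theorem~\ref{thm:weighted} as the correct reduction domain. Once it is in place, the dimension bookkeeping collapsing $2^{d-1}/2^{d-1-n}$ to $2^n$ is automatic.
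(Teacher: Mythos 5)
Your proposal follows essentially the same route as the paper's own proof: Lemma~\ref{lem:monotone} together with Proposition~\ref{prop:factor} to factor the standard reduction $\C^d \to M$ through the weighted projective space $\CP(1,\bm)$, the monotone normalization to ensure this factorization is centered (so Theorem~\ref{thm:weighted} applies to the intermediate fiber), and finally Corollary~\ref{cor:displace} to descend the bound $2^{d-1}/2^{(d-1)-n} = 2^n$. Your additional remarks on the smooth locus of the orbifold and the explicit dimension bookkeeping merely spell out details the paper leaves implicit.
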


\begin{proof}
Using the previous Lemma, and after a possible re-ordering of the normals, we can assume that
\[
\sum_{j=1}^{d-1} m_j \nu_j + \nu_d = 0 \quad\text{with $m_1, \ldots, m_{d-1} \in\N$.}
\]
This condition and Proposition~\ref{prop:factor} imply that the standard symplectic reduction construction 
of $M$ from $\C^d$ factors through the weighted projective space $\CP(1, m_1, \ldots, m_{d-1})$. The monotone 
condition implies that this factorization goes through the special centered non-displaceable torus fiber of this 
weighted projective space (cf. Theorem~\ref{thm:weighted}) and we can apply  Corollary~\ref{cor:displace}.
\end{proof}

As a particular example, consider the monotone $M = \CP^2 \sharp \overline{\CP}^2$, i.e. the monotone
blow-up of $\CP^2$ at one point, with polytope $P\subset (\R^2)^\ast$ given by
\begin{align}
x_1 + 1 \geq 0 \quad (\nu_1 = (1,0)) & \ & -x_1 -x_2 + 1 \geq 0 \quad (\nu_3 = (-1,-1))\notag \\
x_2 + 1 \geq 0 \quad (\nu_2 = (0,1)) & \ & x_1  +x_2 + 1 \geq 0 \quad (\nu_4 = (1,1)).\quad\ \notag
\end{align}
We have that
\[
(\nu_1 + \nu_2 + 2\nu_3) + \nu_4 = 0
\]
and $M$ can be obtained as a ``centered" symplectic reduction of $\CP (1,1,1,2)$. In fact, the polytope
of $\CP (1,1,1,2)$ is given by
\begin{align}
x_1 + 1 \geq 0 & \ & x_3 + 1 \geq 0 \notag \\
x_2 + 1 \geq 0 & \ & -x_1  -x_2 -2x_3 + 1 \geq 0 \notag
\end{align}
and its reduction with respect to the circle $S^1\subset\T^3$ determined by $(1,1,1)\in\R^3 = \Lie (\T^3)$
at level 
\[
x_1 + x_2 + x_3 = 0\,, \ \text{i.e.}\   x_3 = -x_1 -x_2\,,
\]
gives rise to the above polytope for $M = \CP^2 \sharp \overline{\CP}^2$.

\section{Second Application: non-monotone Fano cases }
\label{sec:Fano}

For the applications in this section we will assume that some form of the following general result is true:
\begin{itemize}
\item If $T_i$ or the pair $(T_i, R_i)$ have $HF\neq 0$ or are non-displaceable in $(M_i, \omega_i)$,
$i=1,2$, then the same is true for the corresponding $T_1 \times T_2$ and 
$(T_1 \times T_2, R_1 \times R_2)$ in $(M_1 \times M_2, \omega_1 \times \omega_2)$.
\end{itemize}

\begin{remark}
For Lagrangian torus orbits in toric symplectic manifolds, the set-up of Woodward~\cite{Woodward} applies
and proves a result of this form (cf.~\cite{WW}).
\end{remark}

Moreover, in some of the applications below we will also use the following result:
\begin{itemize}
\item In the total space of the line bundle $\Oo(-1)\to\CP^1$, the special torus $T$ sitting over the
origin in the polygon on the right side of Figure~\ref{fig:3} is non-displaceable. This has been proved 
by Woodward (cf. Example~1.3 in~\cite{Woodward}) and can also be seen as a consequence of a result 
of Cho in~\cite{Cho2} (cf. polygon on the left side of Figure~\ref{fig:3}).
\end{itemize}

\begin{figure}[ht]
      \centering
      \includegraphics{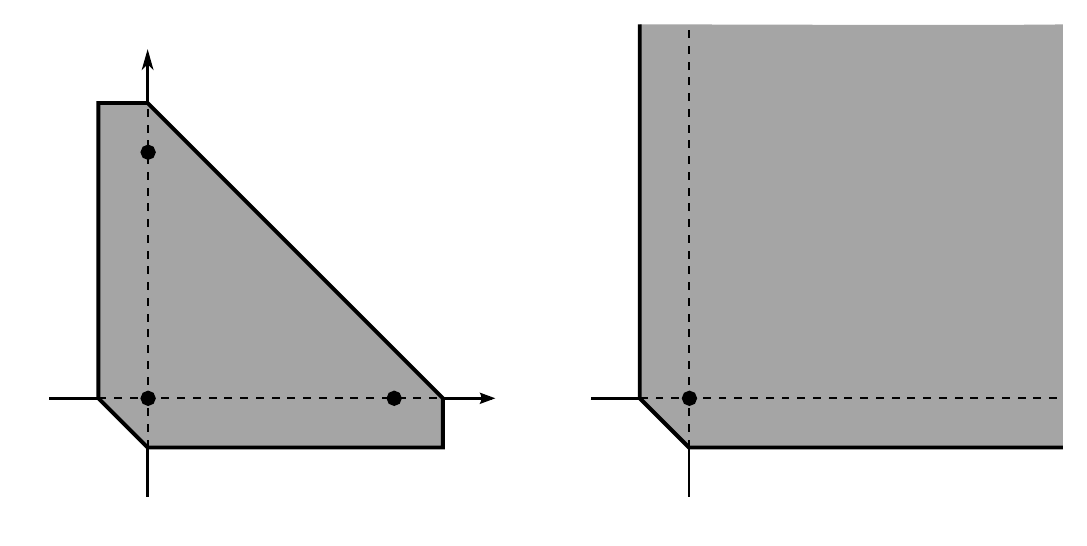}
      \caption{Cho's result on the left and its limit on the right, giving a non-displaceable torus fiber in the 
      total space of the line bundle $\Oo(-1)\to\CP^1$.}
      \label{fig:3}
  \end{figure}

\subsection{Application 5} Consider $M = \CP^2 \sharp \overline{\CP}^2$, i.e. the blow-up of $\CP^2$ at 
one point. 

Figure~\ref{fig:4} illustrates how one can obtain two non-displaceable torus fibers when the exceptional
divisor is small, i.e. smaller than monotone. On the left, one thinks of $M$ as a symplectic reduction 
of $\CP^2 \times \CP^1$, with $\CP^2$ ``smaller" than $\CP^1$, to show that the torus fiber over the origin
is non-displaceable. On the right, one thinks of $M$ as a symplectic reduction of  $\Oo(-1)\times\CP^2$
to show that a torus fiber ``close" to the exceptional divisor is non-displaceable.

\begin{figure}[ht]
      \centering
      \includegraphics[scale=0.9]{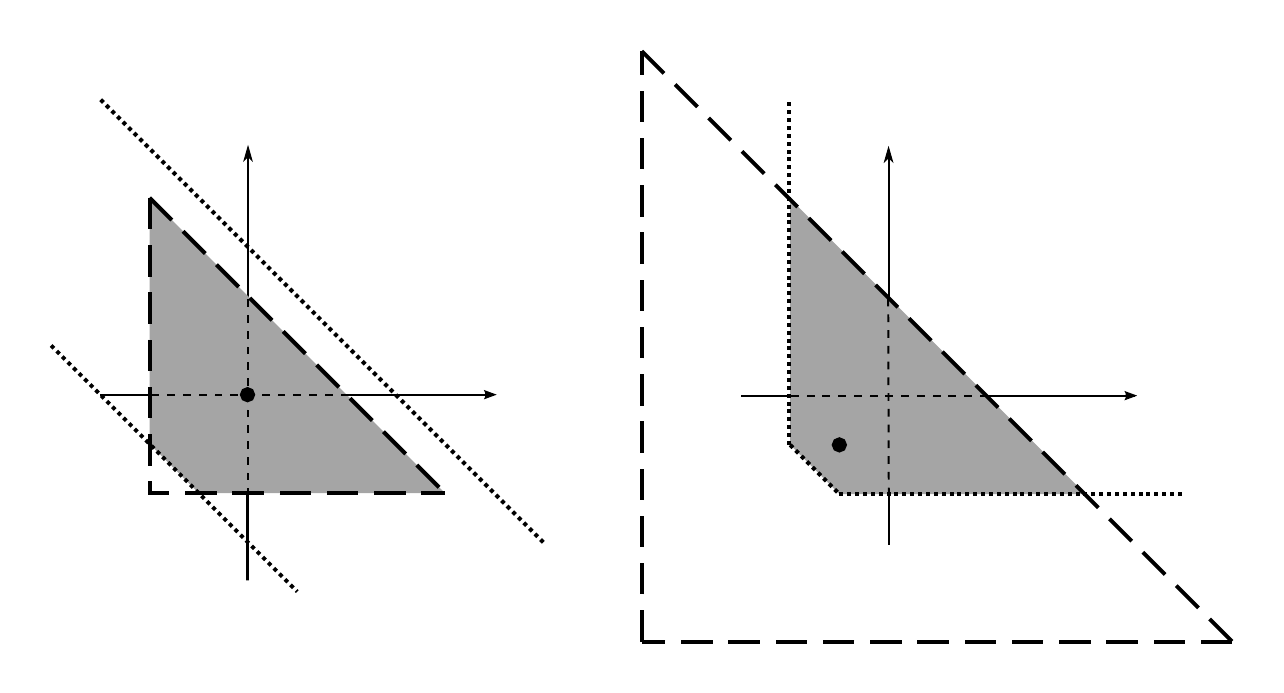}
      \caption{$\CP^2 \sharp \overline{\CP}^2$ as reduction of $\CP^2 \times \CP^1$ (on the left, $\CP^2$ 
      ``smaller" than $\CP^1$) and of $\Oo(-1)\times\CP^2$ (on the right).}
      \label{fig:4}
  \end{figure}

Figure~\ref{fig:5} illustrates how one can obtain one non-displaceable torus fiber when the exceptional
divisor is big, i.e. bigger than monotone. One thinks again of $M$ as a symplectic reduction of 
$\CP^2 \times \CP^1$, but now with $\CP^2$ ``bigger" than $\CP^1$.

\begin{figure}[ht]
      \centering
      \includegraphics{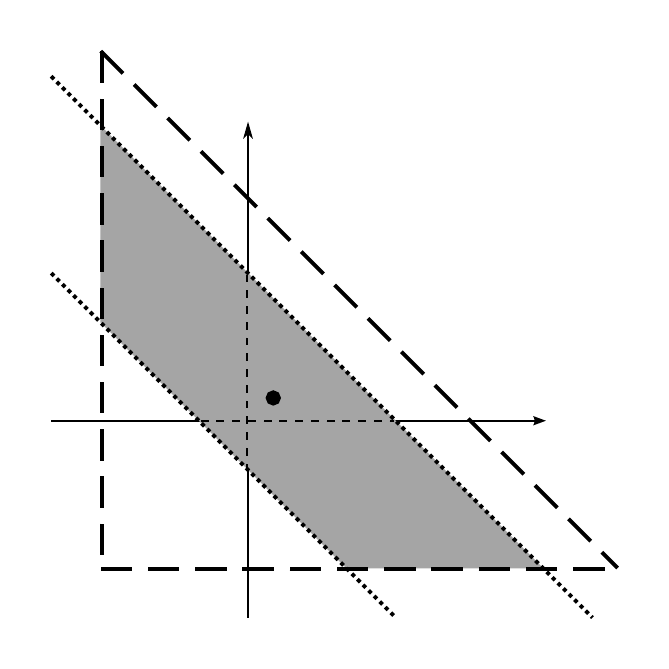}
      \caption{$\CP^2 \sharp \overline{\CP}^2$ as reduction of $\CP^2 \times \CP^1$, now with $\CP^2$ 
      ``bigger" than $\CP^1$.}
      \label{fig:5}
  \end{figure}

Note that the monotone case with just one non-displaceable torus fiber over the special central point
can be obtained as a limit of any of these.

\subsection{Application 6} A very similar idea applies to $M = \CP^2 \sharp 2 \overline{\CP}^2 
\cong (\CP^1 \times \CP^1) \sharp \overline{\CP}^2$, i.e. the equal blow-up of $\CP^2$ at two points which can
also be thought of as the blow-up of $\CP^1 \times \CP^1$ at one point. One recovers the results of 
Fukaya-Oh-Ohta-Ono~\cite{FOOO1, FOOO2} and Woodward~\cite{Woodward} illustrated in 
Figures~\ref{fig:6} and~\ref{fig:7}.

Figure~\ref{fig:6} illustrates how one can obtain two non-displaceable torus fibers in the big two-point blow-up
of $\CP^2$, which is equivalent to a small one-point blow-up of $\CP^1 \times \CP^1$.
On the left, one thinks of $M$ as a symplectic reduction of $\CP^2 \times \CP^1\times \CP^1$, with ``big" $\CP^2$,
to show that the torus fiber over the origin is non-displaceable. On the right, one thinks of $M$ as a symplectic 
reduction of  $\Oo(-1)\times\CP^2$ to show that a torus fiber ``close" to the blown-up point on $\CP^1 \times \CP^1$
is non-displaceable.

\begin{figure}[ht]
      \centering
      \includegraphics[scale=0.75]{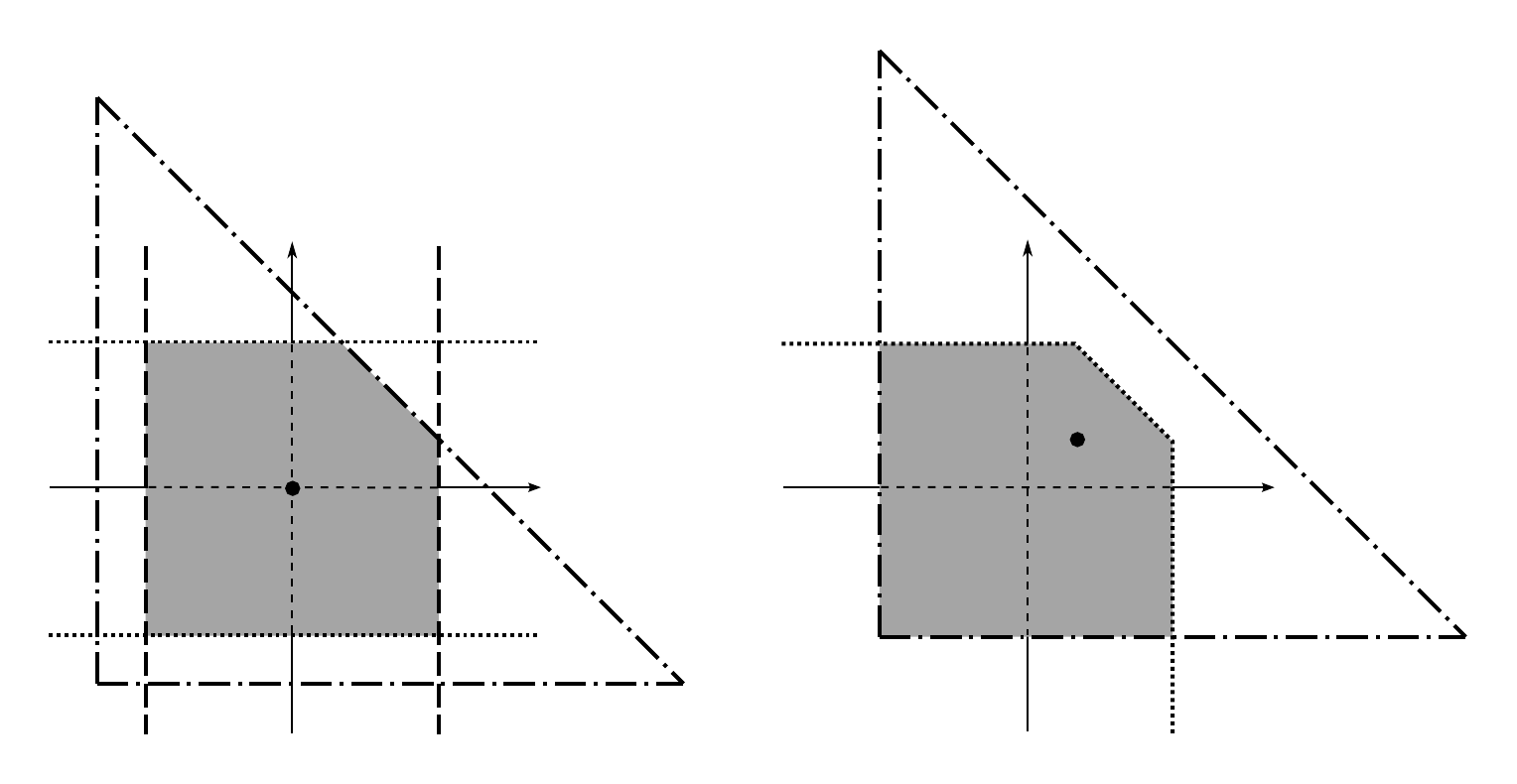}
      \caption{$(\CP^1 \times \CP^1) \sharp \overline{\CP}^2$ (small blow-up), equivalently 
      $\CP^2 \sharp 2 \overline{\CP}^2$ (big blow-ups), as reduction of $\CP^2 \times \CP^1\times \CP^1$ 
      (on the left) and of $\Oo(-1)\times\CP^2$ (on the right).}
      \label{fig:6}
  \end{figure}

Figure~\ref{fig:7} illustrates how one can obtain three non-displaceable torus fibers in the small two-point blow-up
of $\CP^2$, which is equivalent to a big one-point blow-up of $\CP^1 \times \CP^1$.
On the left, one thinks of $M$ as a symplectic reduction of $\CP^2 \times \CP^1\times \CP^1$, with ``small" $\CP^2$
and large $\CP^1$'s, to show that a torus fiber ``close" to the origin is non-displaceable. On the right, one thinks of 
$M$ as a symplectic reduction of  $\Oo(-1)\times\CP^1\times\CP^1$ to show that there is a non-displaceable torus 
fiber ``close" to each blown-up point on $\CP^2$.

\begin{figure}[ht]
      \centering
      \includegraphics[scale=0.75]{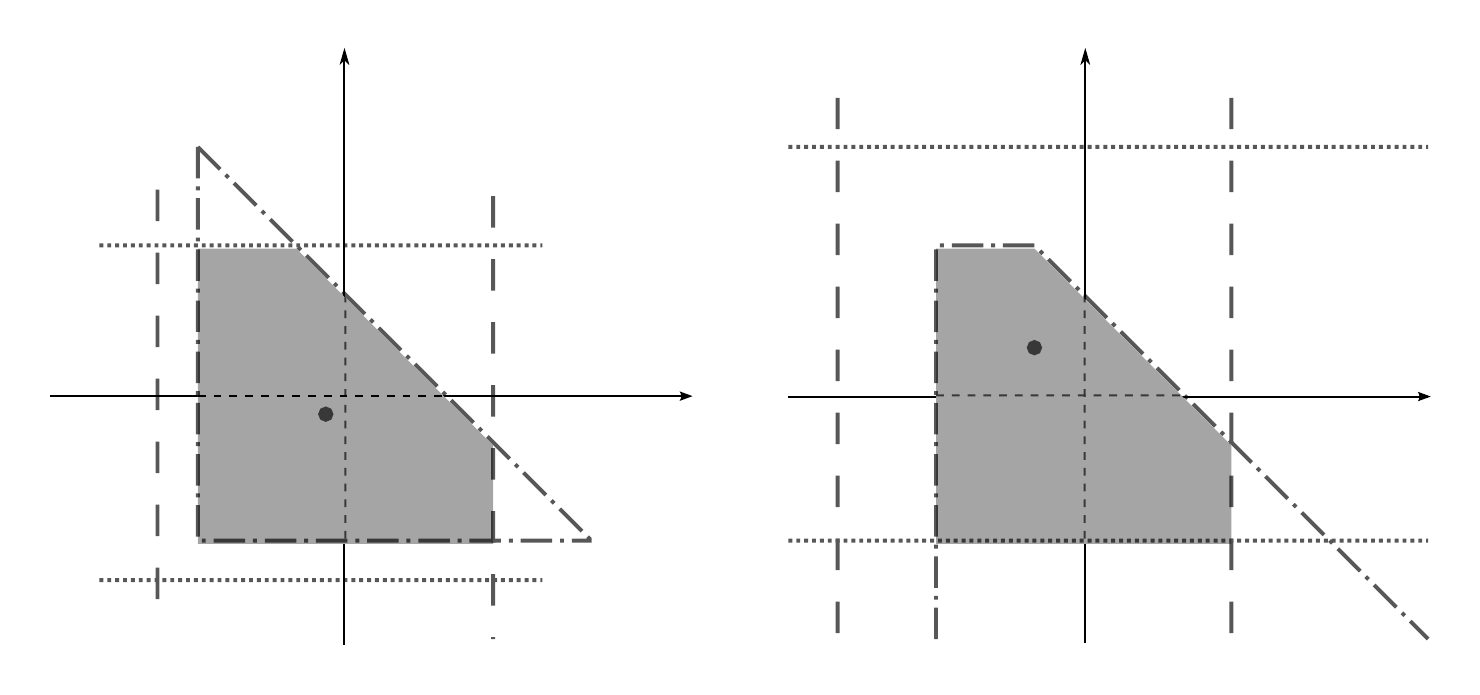}
      \caption{$(\CP^1 \times \CP^1) \sharp \overline{\CP}^2$ (big blow-up), equivalently 
      $\CP^2 \sharp 2 \overline{\CP}^2$ (small blow-ups), as reduction of $\CP^2 \times \CP^1\times \CP^1$ 
      (on the left) and of $\Oo(-1)\times\CP^1 \times \CP^1$ (on the right).}
      \label{fig:7}
  \end{figure}

Again, note that the monotone case with just one non-displaceable torus fiber over the special central point 
can be obtained as a limit of any of these. It could also be obtained using Proposition~\ref{prop:monotone}, as
was illustrated in~\S~\ref{ssec:monotone} for the monotone one-point blow-up $\CP^2 \sharp \overline{\CP}^2$.

\subsection{Application 7} \label{ssec:cont}
Here we will use the same idea to understand an example Fukaya, Oh, Ohta and Ono~\cite{FOOO1, FOOO2}, presented 
in Figure~\ref{fig:8} (see Example 10.3 in~\cite{FOOO}). The symplectic manifold is $M = \CP^2 \sharp 2 \overline{\CP}^2$ 
with blow-ups of different sizes, one smaller than monotone and the other bigger than monotone, and they obtain a closed 
interval of non-displaceable torus fibers. This can also be obtained by considering $M$ as the symplectic reduction of 
$\Oo(-1)\times \CP^1 \times \CP^1$ (or the compact $(\CP^2 \sharp \overline{\CP}^2)\times\CP^1 \times \CP^1$) as shown 
in Figure~\ref{fig:8}.

\begin{figure}[ht]
      \centering
      \includegraphics[scale=0.95]{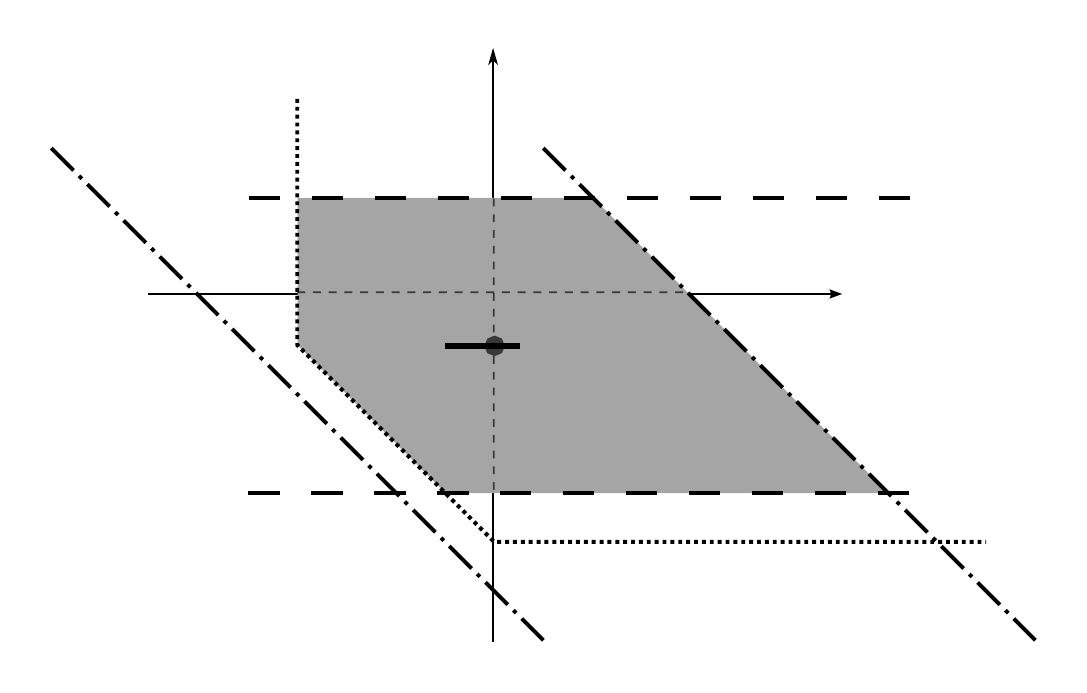}
      \caption{$\CP^2 \sharp 2 \overline{\CP}^2$ as reduction of $\Oo(-1)\times \CP^1 \times \CP^1$.}
      \label{fig:8}
  \end{figure}

The details are as follows. Consider $(\CP^2 \sharp 2 \overline{\CP}^2, \omega_\alpha)$ given by the
Delzant polytope $P_\alpha \subset (\R^2)^\ast$ determined by the following inequalities:
\begin{align}
x_1 + 1 \geq 0 & \ & x_1 + x_2 + 1 + \alpha \geq 0 \notag \\
x_2 + 1 \geq 0 & \ & -x_2 + (1-2\alpha) \geq 0 \notag \\
-(x_1 + x_2) +1 \geq 0 & \ & \text{with $0< \alpha < 1$.}\notag
\end{align}
The non-displaceable torus fibers are over the points with coordinates $(-\alpha + \lambda, -\alpha)$ for
$0 < \lambda < 3\alpha / 2$ (Figure~\ref{fig:8} corresponds to $\alpha = \lambda = 1/4$). To prove that for
each such pair of real numbers $\alpha$ and $\lambda$, we consider $\Oo(-1)\times \CP^1 \times \CP^1$ 
with moment polytope given by the cartesian product of the following polytopes:
\begin{itemize}
\item[-] the one for the $\Oo(-1)$ factor is given by the inequalities
\[
x_1 + 1 \geq 0 \,,\quad x_1 + x_2 + 1 + \alpha \geq 0 \quad\text{and}\quad x_2 + 1 + \lambda \geq 0\,,
\]
having a non-displaceable torus fiber over the point with coordinates $x_1 = -\alpha + \lambda$ and $x_2 = -\alpha$.
\item[-] the one for the first $\CP^1 $ factor is given by the inequalities
\[
x_3 + 1 \geq 0 \quad\text{and}\quad -x_3 + (1-2\alpha) \geq 0\,,
\]
having a non-displaceable torus fiber over the point with coordinate $x_3 = -\alpha$.
\item[-] the one for the second $\CP^1 $ factor is given by the inequalities
\[
x_4 + 1 + 4\alpha -2\lambda \geq 0 \quad\text{and}\quad -x_4 + 1 \geq 0\,,
\]
having a non-displaceable torus fiber over the point with coordinate $x_4 = -2\alpha+\lambda$.
\end{itemize}
We can now do symplectic reduction with respect to the $2$-torus $\T^2 \subset \T^4$ determined
by the Lie algebra vectors $(0,-1,1,0), (-1,-1,0,1)\in\R^4 = \Lie (\T^4)$ at the level given by
\[
x_3 = x_2 \quad\text{and}\quad x_4 = x_1 + x_2\,,
\]
to obtain the polytope $P_\alpha$ with non-displaceable torus fiber over the point with coordinates
$x_ 1 = -\alpha + \lambda$ and $x_2 =-\alpha$.

\subsection{Application 8} One can use the same idea to understand non-displaceable torus fibers
on $M = \CP^2 \sharp 3 \overline{\CP}^2$ for all possible sizes of blown-up points. Figure~\ref{fig:9} 
illustrates the case of three small size blow-ups, where one gets four non-displaceable torus fibers. 

\begin{figure}[ht]
      \centering
      \includegraphics[scale=0.9]{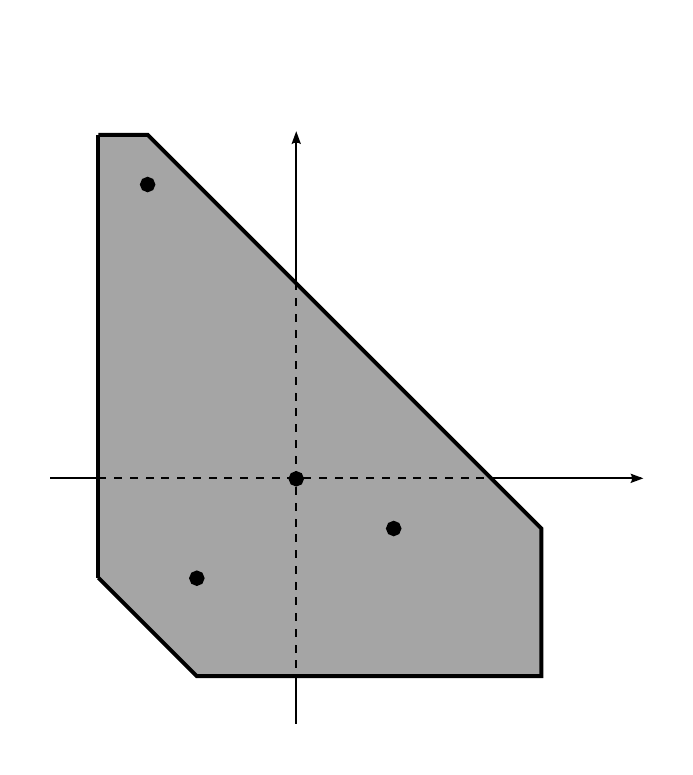}
      \caption{Four non-displaceable torus fibers on $\CP^2 \sharp 3 \overline{\CP}^2$.}
      \label{fig:9}
  \end{figure}

The center fiber is obtained by seeing $M$ as the symplectic reduction of $\CP^2\times\CP^1\times\CP^1\times\CP^1$, 
as in the left side of Figure~\ref{fig:10}, while each of the off-center fibers is obtained by seeing $M$ as the symplectic 
reduction of $\Oo(-1)\times\CP^1\times\CP^1\times\CP^1$, as in the right side of Figure~\ref{fig:10}.

\begin{figure}[ht]
      \centering
      \includegraphics[scale=0.65]{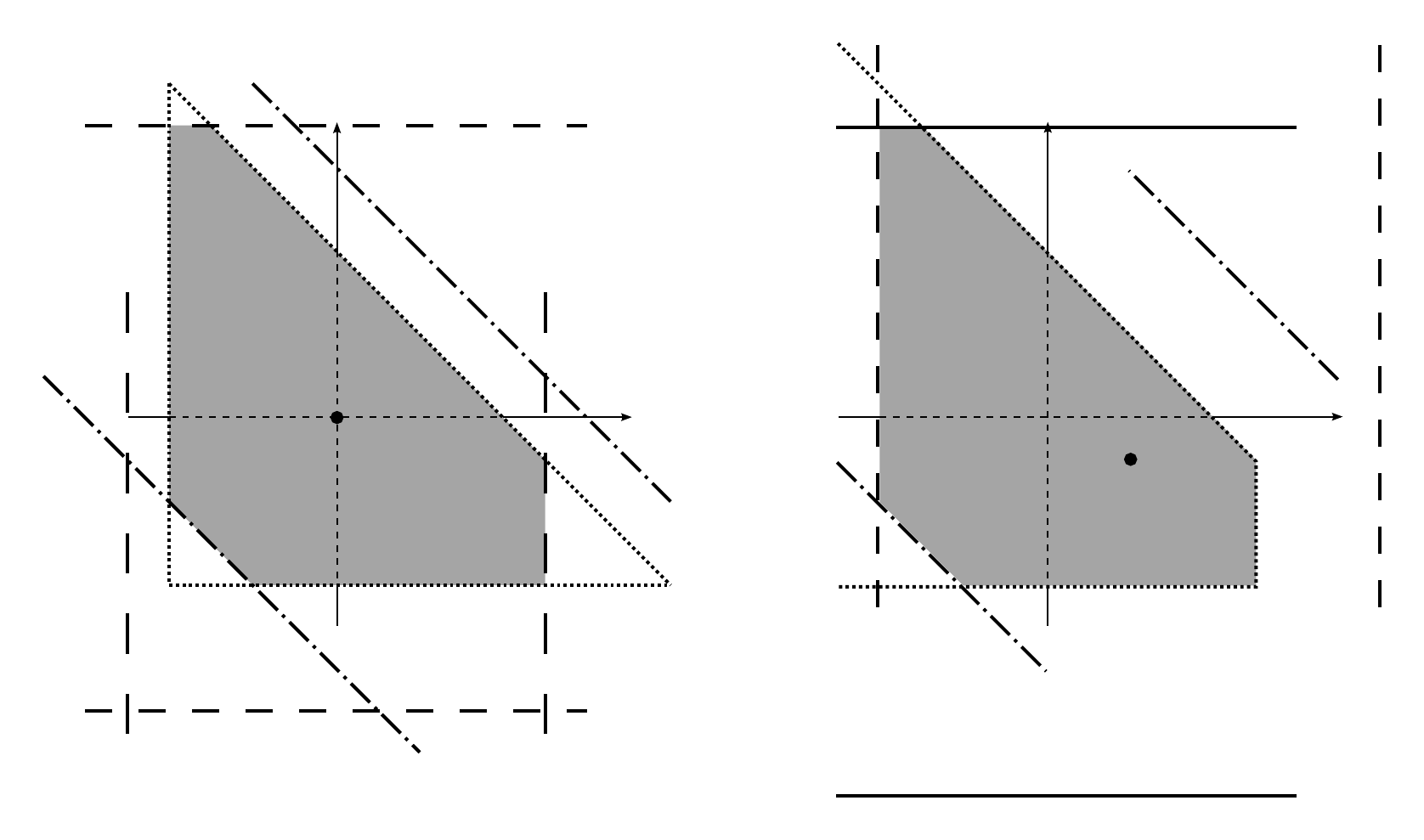}
      \caption{$\CP^2 \sharp 3 \overline{\CP}^2$ as reduction of $\CP^2\times\CP^1\times\CP^1\times\CP^1$ 
      (on the left) and of $\Oo(-1)\times\CP^1\times\CP^1\times\CP^1$ (on the right).}
      \label{fig:10}
  \end{figure}

Note that we can also get a closed interval of non-displaceable torus fibers for 
$M = \CP^2 \sharp 3 \overline{\CP}^2$ , e.g.  by blowing up the example in Application 6 at the lower right 
corner of Figure~\ref{fig:8}.

\section{Third Application: non-Fano cases}
\label{sec:non-Fano}

Here we will use the basic non-displaceability result for the central torus fiber of a weighted projective
space stated in Theorem~\ref{thm:weighted} and show how it implies non-displaceability
of at least one torus fiber on any Hirzebruch surface, a result of Fukaya-Oh-Ohta-Ono~\cite{FOOO1} (see 
Example 10.1 in~\cite{FOOO}), and a continuous interval of non-displaceable torus fibers on a particular
non-Fano toric surface considered by McDuff~\cite{McDuff2}.

\subsection{Application 9} \label{ssec:Hirz}

Consider the weighted projective space $\CP (1,1,k)$, the symplectic quotient of $\C^3$
by the $S^1$-action with weights $(1,1,k)$, with moment polytope $P_k \subset \R^2$ given by
\[
x_1 + 1 \geq 0\,,\quad x_2 + 1 \geq 0 \quad\text{and}\quad -x_1 -k x_2 +1 \geq 0\,.
\]
The special torus sitting over the origin is non-displaceable. Figure~\ref{fig:11} illustrates the $k=2$ case.

\begin{figure}[ht]
      \centering
      \includegraphics[scale=1.0]{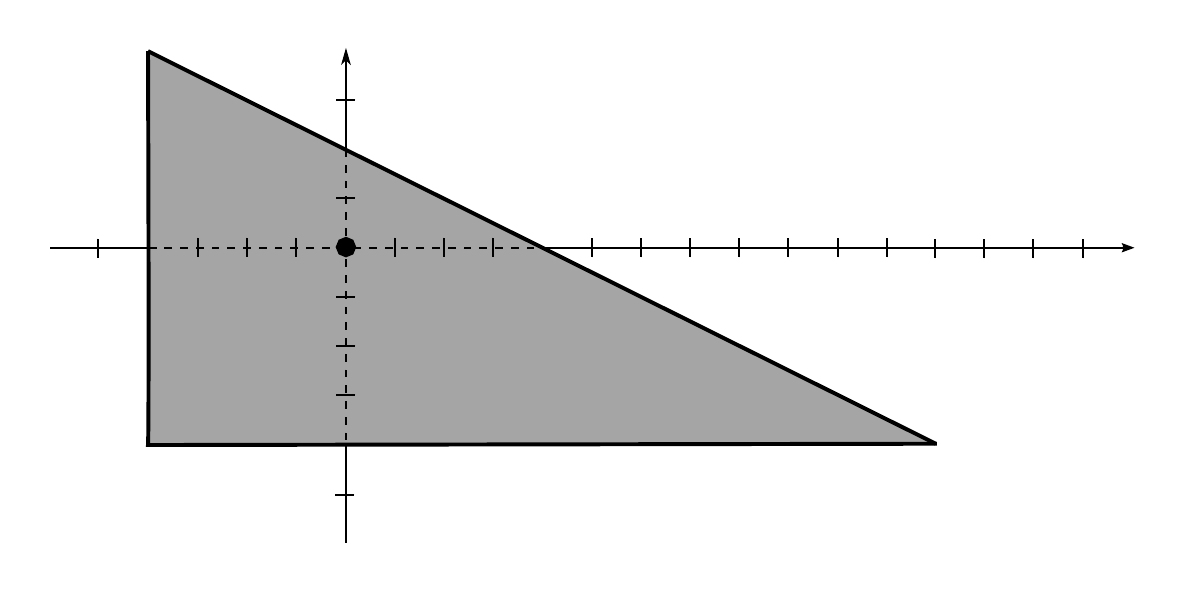}
      \caption{$\CP (1,1,2)$.}
      \label{fig:11}
  \end{figure}

Now let $H_k := \Proj (\Oo (-k) \oplus \C) \to \CP^1$ be a Hirzebruch surface, with $2\leq k \in\N$. 
Each of these Hirzebruch surfaces can be seen as a centered symplectic reduction of $\CP (1,1,k) \times \CP^1$ 
and that implies at least one non-displaceable torus fiber on any $H_k$. 
Figure~\ref{fig:12} illustrates the $k=2$ case.

\begin{figure}[ht]
      \centering
      \includegraphics[scale=1.0]{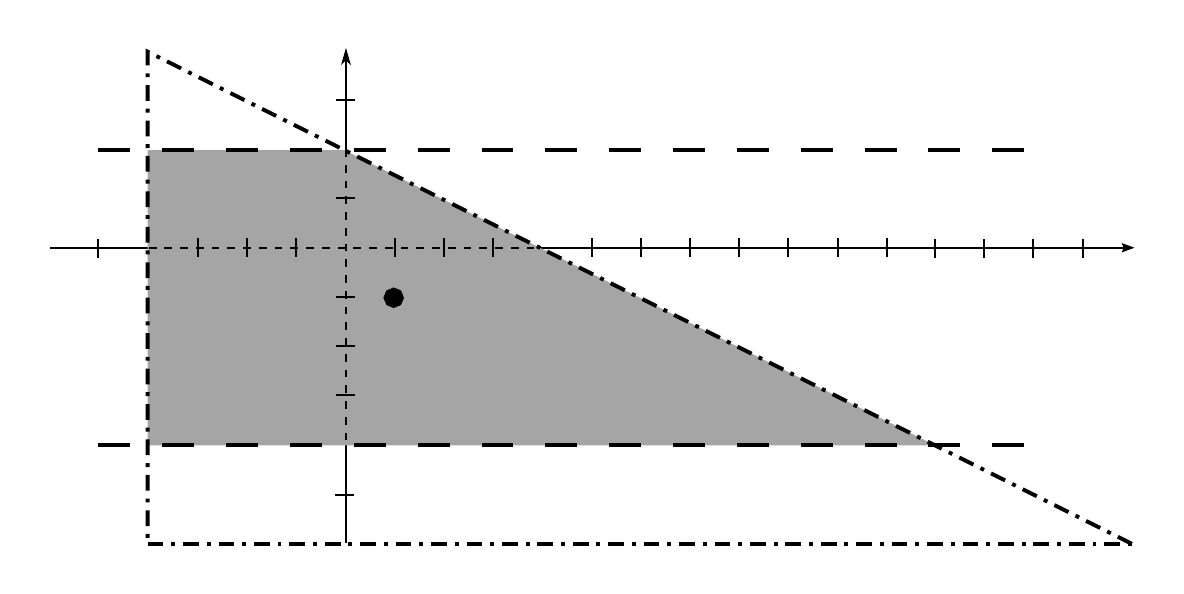}
      \caption{The Hirzebruch surface $H_2$ as reduction of $\CP (1,1,2) \times \CP^1$.}
      \label{fig:12}
  \end{figure}

\subsection{Application 10} \label{ssec:Dusa}

Here we will consider the non-Fano toric symplectic $4$-manifold described by McDuff in~\S~2.1 of~\cite{McDuff2}.
We already pointed out its potential significance in Remark~\ref{rmk:Dusa}.

Consider the Delzant polytope $P\subset (\R^2)^\ast$ determined by the following inequalities:
\begin{align}
x_1 +1 \geq 0 & \ & -x_1 - 3 x_2 + 3 \geq 0 \notag \\
x_2 +1 \geq 0 & \ & -x_1 - 2x_2 +3 \geq 0 \notag \\
-x_2 +1 \geq 0 & \ & \ \notag
\end{align}
The torus fibers over the points with coordinates $x_1 = \lambda$ and $x_2 = 0$, with $1 < \lambda < 2$, 
are non-displaceable by probes. We can show that all these fibers are in fact non-displaceable by
considering this toric $4$-manifold as a symplectic reduction of $\CP (1,1,2) \times \CP^1 \times \Oo(-1)$,
as shown in Figure~\ref{fig:13} (where $\lambda = 3/2$).

\begin{figure}[ht]
      \centering
      \includegraphics[scale=1.0]{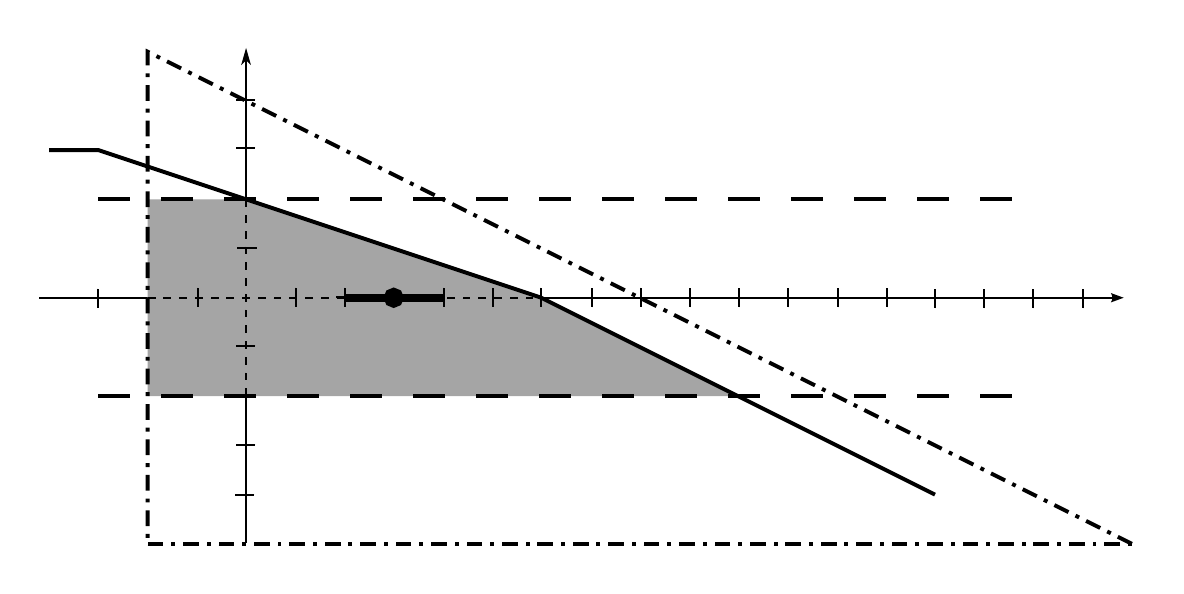}
      \caption{Non-Fano toric surface with interval on non-displaceable torus fibers.}
      \label{fig:13}
  \end{figure}

More precisely, for each $1 < \lambda < 2$, consider $\CP (1,1,2) \times \CP^1 \times \Oo(-1)$
with moment polytope given by the cartesian product of the following polytopes:
\begin{itemize}

\item[-] the one for the $\CP (1,1,2)$ factor is given by the inequalities
\[
x_1 + 1 \geq 0 \,,\quad x_2 + 1 + \lambda \geq 0 \quad\text{and}\quad -x_1 -2 x_2 + 1 + 2\lambda \geq 0\,,
\]
having a non-displaceable torus fiber over the point with coordinates $x_1 = \lambda$ and $x_2 = 0$.

\item[-] the one for the $\CP^1 $ factor is given by the inequalities
\[
x_3 + 1 \geq 0 \quad\text{and}\quad -x_3 + 1 \geq 0\,,
\]
having a non-displaceable torus fiber over the point with coordinate $x_3 = 0$.

\item[-] the one for the $\Oo(-1)$ factor is given by the inequalities
\[
x_4 + 3 \geq 0 \,,\quad x_5 + 3 - \lambda \geq 0 \quad\text{and}\quad x_4 + x_5 + 3 \geq 0\,,
\]
having a non-displaceable torus fiber over the point with coordinate $x_4 = -\lambda$ and $x_5 = 0$.
\end{itemize}
We can now do symplectic reduction with respect to the $3$-torus $\T^3 \subset \T^5$ determined
by the Lie algebra vectors $(0,-1,1,0,0), (-1,-2,0,1,0), (0, -1,0,0,1)\in\R^5 = \Lie (\T^5)$ at the level given by
\[
x_3 = x_2 \,,\quad x_4 = -x_1 -2x_2 \quad\text{and}\quad x_5 = - x_2\,,
\]
to obtain the polytope $P$ with non-displaceable torus fiber over the point with coordinates
$x_ 1 = \lambda$ and $x_2 = 0$.



\begin{thebibliography}{000}

\bibitem{Alston} G.~Alston, \textit{Lagrangian Floer homology of the Clifford torus and real projective space
in odd dimensions}, Journal of Symplectic Geometry {\bf 9} (2011), 83--106.

\bibitem{Alston-Amorim} G.~Alston and L.~Amorim, \textit{Floer cohomology of torus fibers and real
Lagrangians in Fano toric manifolds}, to appear in Internat. Math. Research Notices.

\bibitem{BC1} P.~Biran and O.~Cornea, \textit{Lagrangian Quantum Homology}, in ``New perspectives and 
challenges in symplectic field theory", 1--44, CRM Proc. Lecture Notes {\bf 49}, AMS, 2009.

\bibitem{BC2} P.~Biran and O.~Cornea, \textit{Rigidity and uniruling for Lagrangian submanifolds},
Geom. Topol. {\bf 13} (2009), 2881--2989. 

\bibitem{BEP} P.~Biran, M.~Entov and L.~Polterovich, \textit{Calabi quasimorphisms for the symplectic ball},
Comm. Contemp. Math. {\bf 6} (2004), 793--802.

\bibitem{Bo1} M.~S.~Borman, \textit{Symplectic reduction of quasi-morphisms and quasi-states}, preprint (2010),
arXiv:1007.4036.

\bibitem{Bo2} M.~S.~Borman, \textit{Quasi-states, quasi-morphisms, and the moment map}, preprint (2011), 
arXiv:1105.1805.

\bibitem{Cho1} C.-H.~Cho, \textit{Holomorphic discs, spin structures and the Floer cohomology of the Clifford torus},
Int. Math. Res. Not. {\bf 35} (2004), 1803--1843.

\bibitem{Cho2} C.-H.~Cho, \textit{Non-displaceable Lagrangian submanifolds and Floer cohomology with
non-unitary line bundle}, J. Geom. Phys. {\bf 58} (2008), 1465--1476.

\bibitem{Cho3} C.-H.~Cho and M.~Poddar, \textit{Holomorphic orbidiscs and Lagrangian Floer cohomology of 
compact toric orbifolds}, in preparation.

\bibitem{Cho-Oh} C.-H.~Cho and Y.-G.~Oh, \textit{Floer cohomology and disc instantons of Lagrangian torus 
fibers in toric Fano manifolds}, Asian J. Math. {\bf 10} (2006), 773--814.

\bibitem{Cox} D.~Cox, \textit{Lectures on Toric Varieties}, 
http://www.cs.amherst.edu/\~{}dac/lectures/coxcimpa.pdf.

\bibitem{EP} M.~Entov and L.~Polterovich, \textit{Rigid subsets of symplectic manifolds}, Compositio Math. {\bf 145} 
(2009), 773--826.

\bibitem{FOOO1} K.~Fukaya, Y.-G.~Oh, H.~Ohta and K.~Ono, \textit{Lagrangian Floer theory on compact toric 
manifolds I}, Duke Math. J. {\bf 151} (2010), 23--174.

\bibitem{FOOO2} K.~Fukaya, Y.-G.~Oh, H.~Ohta and K.~Ono, \textit{Lagrangian Floer theory on compact toric 
manifolds II: bulk deformations}, preprint (2008), arXiv.org:0810.5654, to appear in Selecta Mathematica.

\bibitem{FOOO} K.~Fukaya, Y.-G.~Oh, H.~Ohta and K.~Ono, \textit{Lagrangian Floer theory on compact toric
manifolds: a survey}, preprint (2010), arXiv:1011.4044.

\bibitem{McDuff} D.~McDuff, \textit{Displacing Lagrangian toric fibers via probes}, preprint (2009), 
arXiv:0904.1686, to appear in Geometry and Topology.

\bibitem{McDuff2} D.~McDuff, \textit{The topology of toric symplectic manifolds}, Geom. Topol. {\bf 15} 
(2011), 145--190.

\bibitem{Oh} Y.~Oh, \textit{Floer cohomology of Lagrangian intersections and pseudo-holomorphic disks I}, Comm. Pure Appl. Math. {\bf 46} (1993) 949--993; Addendum, Comm. Pure Appl. Math. {\bf 48} (1995), 1299--1302.

\bibitem{Tamarkin}  D.~Tamarkin, \textit{Microlocal condition for non-displaceability}, preprint (2008),
arXiv:0809.1584.

\bibitem{WW} G.~Wilson and C.~Woodward, \textit{Quasimap Floer cohomology and singular symplectic
quotients}, preprint (2011), arXiv:1105.0712.

\bibitem{Woodward} C.~Woodward, \textit{Gauged Floer theory of toric moment fibers}, preprint (2010),
arXiv:1004.2841, to appear in Geometric and Functional Analysis..

\end{thebibliography}
\end{document}